\tikzstyle{every node}=[font=\small]
\newtheorem{theorem}{Theorem}[section]
\newtheorem{lemma}[theorem]{Lemma}
\newtheorem{proposition}[theorem]{Proposition}
\newtheorem{remark}[theorem]{Remark}
\theoremstyle{definition}
\newcommand{\hul}{H_{ul}}
\newcommand{\beqa}{\begin{eqnarray*}}
\newcommand{\eeqa}{\end{eqnarray*}}
\newcommand{\field}[1]{\mathbb{#1}}
\newcommand{\bR}{\field{R}}        
\newcommand{\bN}{\field{N}}        
\def\cS{\mathcal{S}}
\def\cD{\mathcal{D}}
\def\rd{\bR^d}
\def\rdd{{\bR^{2d}}}
\def\R{\Big))}
\def\<{\left<}
\def\>{\Big)>}
\def\mv1{M_v^1}
\def\mn{(m,n)}
\def\mn'{(m',n')}
\def\N{\mathbb{N}}
\def\R{\mathbb{R}}
\def\Ren{\mathbb{R}^d}
\def\tauhz0{\widehat{\mathcal{T}}^\hbar(z_0)}
\def\tauhz{\widehat{\mathcal{T}}^\hbar(z)}
\def\Sn2{S_{2}(L^{2}(\Ren))}
\def\S1{S_{1}(L^{2}(\Ren))}
\def\sig00{\sigma_{0,0}}
\DeclareMathOperator*{\esssup}{ess-\,sup}
\begin{document}
\begin{abstract}
\medskip We consider the time slicing approximations of Feynman path integrals, constructed via piecewice classical paths. A detailed study of the convergence in the norm operator topology, in the space $\mathcal{B}(L^2(\rd))$ of bounded operators on $L^2$, and even in finer operator topologies, was carried on by D. Fujiwara in the case of smooth potentials with an at most quadratic growth. In the present paper we show that the result about the convergence in $\mathcal{B}(L^2(\rd))$ remains valid if the potential is only assumed to have second space derivatives in the Sobolev space $H^{d+1}(\R^d)$ (locally and uniformly), uniformly in time. The proof is non-perturbative in nature, but relies on a precise short time analysis of the Hamiltonian flow at this Sobolev regularity and on the continuity in $L^2$ of certain oscillatory integral operators with non-smooth phase and amplitude.
\medskip
\end{abstract}

\title[Feynman path integrals for non smooth potentials]{On the time slicing approximation of Feynman path integrals for non-smooth potentials}

\subjclass{Primary 35S30; Secondary 47G30}
\author{Fabio Nicola}
\address{Dipartimento di Scienze Matematiche,
Politecnico di Torino, corso Duca degli Abruzzi 24, 10129 Torino,
Italy}
\email{fabio.nicola@polito.it}

\subjclass[2010]{81Q30, 35S30, 42B20, 46E35}
\keywords{Feynman path integrals, time slicing approximation, Schr\"odinger equation, Kato-Sobolev spaces, norm operator topology, non-smooth potentials}
\maketitle

\section{Introduction}

Consider the Schr\"odinger equation
\begin{equation}\label{equazione}
i\hbar \partial_t u=-\frac{1}{2}\hbar^2\Delta u+V(t,x)u
\end{equation}
where $0<\hbar\leq 1$ and the potential $V(t,x)$, $t\in \R$, $x\in\rd$ is a real-valued function. A fundamental problem is to find more or less explicit formulas for the solutions. As a general principle, R. Feynman in \cite{feynman,feynman-higgs} conjectured that, under reasonable conditions on $V(t,x)$, the propagator should be expressed as a certain formal integral on an infinite dimensional space of paths in configuration space. While this principle was explored and made mathematically rigorous by several authors,  here we will adopt the so-called ``time slicing'' approach as developed by D. Fujiwara and his school in
\cite{fujiwara1,fujiwara2,fujiwara3,fujiwara4,fujiwara5,fujiwara6,fujiwara7,ichinose2,kitada1,kitada2,kumanogo0,kumanogo1,kumanogo2,kumanogo3,kumanogo4,kumanogo5,kumanogo6,nicola,tsuchida,yajima}.\par
 In short, inspired by the well-known formula of the Schr\"odinger propagator of the free particle, in general one considers the parametrices $E^{(0)}(t,s)$ defined by
\begin{equation}\label{ezero}
E^{(0)}(t,s)f(x)=\frac{1}{(2\pi i (t-s) \hbar)^{d/2}} \int_{\rd} e^{i\hbar^{-1}S(t,s,x,y)} f(y)\, dy,
\end{equation}
where $S(t,s,x,y)$ is the action computed along the classical path $\gamma$ (i.e.\ the path satisfying the Euler-Lagrange equations) satisfying the boundary condition $\gamma(s)=y$, $\gamma(t)=x$ (which will be unique for $|t-s|$ small, under the assumptions below). Hence
\begin{equation}\label{azione}
S(t,s,x,y)=\int_s^t \mathcal{L}(\gamma(\tau),\dot{\gamma}(\tau),\tau)\,d\tau,
\end{equation}
where $\mathcal{L}$ is the Lagrangian of the corresponding classical system. \par
Then one considers a subdivision $\Omega:s=t_0<t_1<\ldots<t_L=t$ of the interval $[s,t]$ and the composition
\begin{equation}\label{zero0}
E^{(0)}(\Omega,t,s)=E^{(0)}(t,t_{L-1}) E^{(0)}(t_{L-1},t_{L-2})\ldots E^{(0)}(t_1,s),
\end{equation}
which should be regarded as an approximation of the path integral, exactly as the Riemann sums appear in the definition of the Riemann integral. Therefore, one wonders whether 
the operators $E^{(0)}(\Omega,t,s)$ converge to the actual propagator $U(t,s)$ as
\[
\omega(\Omega):=\sup\{t_j-t_{j-1}: j=1,\ldots,L\}\to 0.
\]
The problem of the convergence of $E^{(0)}(\Omega,t,s)$ in the {\it norm} operator topology, in the space $\mathcal{B}(L^2(\rd))$ of bounded operators in $L^2$, was first considered in \cite{fujiwara1}, where $V(t,x)$ was assumed to have partial derivatives 
\[
\partial^\alpha_x V\in L^\infty(\R\times \rd)\quad {\rm for}\ |\alpha|\geq 2.
\] Under the same hypotheses it was then proved in \cite{fujiwara2} that there is in fact convergence in a finer topology, at the level of the integral kernels of the involved operators. This suggests that the former result of convergence in $\mathcal{B}(L^2(\rd))$ could hold under weaker regularity assumptions on $V(t,x)$. The aim of this note is exactly to investigate this issue. \par
We consider the so-called Kato-Sobolev spaces $\hul^\kappa(\rd)$, $\kappa\in\bN$ (also called uniformly local Sobolev spaces), of functions $f\in L^1_{loc}(\rd)$ which belong to the usual $L^2$-based Sobolev spaces $H^\kappa(B)$ for every open ball $B\subset\rd$ of radius $1$, uniformly with respect to $B$, in the sense that $\sup_{B}\|f\|_{H^\kappa(B)}<+\infty$. In comparison with the usual Sobolev spaces $H^\kappa(\rd)$, the functions in $\hul^\kappa(\rd)$ for $\kappa>d/2$ are still bounded and continuous but need not decay at infinity; see \cite{Boulk,arsu,kato} and Section 2 below for details. \par
Now, we assume the following condition. \par\medskip
{\bf Assumption (A)} 
{\it $V(t,x)$ is a real-valued function in $L^1_{loc}(\R\times\rd)$ such that for almost every $t\in\R$ and $|\alpha|\leq 2$ the derivatives $\partial^\alpha_x V(t,x)$ exist and are continuous with respect to $x$, and moreover}
\begin{equation}\label{due}
\partial^\alpha_x V\in L^\infty(\R;\hul^{d+1}(\rd))\quad \textrm{for}\ |\alpha|=2.
\end{equation}
\par
 Under this assumption we will see that there exists $\delta>0$ such that if $0<|t-s|\leq \delta$ and $x,y\in\rd$ there exists only a classical path $\gamma$ satisfying $\gamma(s)=y$ and $\gamma(t)=x$, so that $S(t,s,x,y)$ is well defined. Also, the operators $E^{(0)}(t,s)$, initially defined on Schwartz functions, extend to bounded operators on $L^2(\rd)$, so that the composition $E^{(0)}(\Omega, t,s)$ in \eqref{zero0} is well defined too, for any subdivision $\Omega$ with $\omega(\Omega)\leq \delta$. \par
We have the following result. 
\begin{theorem}\label{mainteo}
Suppose the condition in Assumption {\rm (A)}. For every $T>0$ there exists $C=C(T)>0$ such that for $0<t-s\leq T$ and any subdivision $\Omega$ of the interval $[s,t]$ with $\omega(\Omega)\leq \delta$, and $0<\hbar\leq 1$, we have
\begin{equation}\label{tre}
\|E^{(0)}(\Omega,t,s)-U(t,s)\|_{L^2\to L^2}
\leq C \omega(\Omega)(t-s).
\end{equation}
\end{theorem}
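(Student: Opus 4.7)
My plan is the classical telescoping scheme, reduced to two local ingredients: (i) a short-time error bound $\|E^{(0)}(t',t)-U(t',t)\|_{L^2\to L^2}\le C(t'-t)^2$ for $0<t'-t\le\delta$, and (ii) a uniform stability bound $\|E^{(0)}(\Omega;t,t_j)\|_{L^2\to L^2}\le C$ for the truncated products $E^{(0)}(\Omega;t,t_j):=E^{(0)}(t,t_{L-1})\cdots E^{(0)}(t_{j+1},t_j)$ (with the convention that the empty product is the identity). Combining (i) and (ii) with the unitarity of $U(t,s)$ and the evolution law $U(t,s)=U(t,t_{L-1})\cdots U(t_1,s)$, the standard telescoping identity
\[
E^{(0)}(\Omega,t,s)-U(t,s)=\sum_{j=1}^{L} E^{(0)}(\Omega;t,t_j)\bigl[E^{(0)}(t_j,t_{j-1})-U(t_j,t_{j-1})\bigr]U(t_{j-1},s)
\]
bounds the operator norm of the left-hand side by $C\sum_{j=1}^L(t_j-t_{j-1})^2\le C\,\omega(\Omega)(t-s)$, which is exactly~\eqref{tre}.

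For (i) I would invoke Duhamel's formula
\[
E^{(0)}(t',t)-U(t',t)=\frac{1}{i\hbar}\int_t^{t'}U(t',\tau)\,R(\tau,t)\,d\tau,\qquad R(\tau,t):=(i\hbar\partial_\tau-H(\tau))E^{(0)}(\tau,t),
\]
and compute $R(\tau,t)$ directly by differentiating the kernel in~\eqref{ezero}. The Sobolev embedding $\hul^{d+1}(\rd)\hookrightarrow C^0_b(\rd)$ combined with Assumption~(A) gives $V\in L^\infty_tC^2_x$, which is sufficient for existence and uniqueness of the classical two-point boundary value problem on short time intervals, for the $C^2$-regularity of $S(\tau,t,x,y)$ in $(x,y)$, and for the validity of the Hamilton--Jacobi equation $\partial_\tau S+\tfrac12|\nabla_xS|^2+V(\tau,x)=0$. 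Using the latter, a direct computation yields that $R(\tau,t)$ has integral kernel
\[
\frac{i\hbar}{2}\Bigl(\Delta_xS(\tau,t,x,y)-\frac{d}{\tau-t}\Bigr)\bigl(2\pi i(\tau-t)\hbar\bigr)^{-d/2}e^{i\hbar^{-1}S(\tau,t,x,y)}.
\]
The short-time expansion of $S$ gives $\Delta_xS-d/(\tau-t)=O(\tau-t)$ in an appropriate Kato--Sobolev norm, and the goal is to turn this into $\|R(\tau,t)\|_{L^2\to L^2}=O(\hbar(\tau-t))$; plugged into Duhamel this produces the desired $O((t'-t)^2)$ bound uniformly in $0<\hbar\le 1$.

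The technical heart, and the main obstacle, is precisely this passage from the pointwise kernel estimate to an $L^2$-operator bound at the low regularity of Assumption~(A). Since $\partial^2_xV$ is only in $L^\infty_t\hul^{d+1}(\rd)$, the phase $S$ and the amplitude of $R$ inherit only a Kato--Sobolev spatial regularity, so the classical smooth-phase $L^2$-continuity theorems for Fourier integral operators (Asada--Fujiwara, Kumano-go) are not applicable. One must invoke instead a tailored continuity result in $L^2$ for oscillatory integral operators whose phase is a short-time perturbation of the free phase $|x-y|^2/(2(\tau-t))$ and whose amplitude is controlled in $\hul^{d+1}$ in the spatial variables; the Sobolev index $d+1$ in the hypothesis is dictated precisely by this boundedness statement. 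The required $\hul^{d+1}$-estimates on $S$ and its derivatives, in turn, come from a careful short-time analysis of the Hamiltonian flow at the current regularity.

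Once (i) is available, (ii) is a direct consequence of it: by unitarity of $U$ and the triangle inequality, $\|E^{(0)}(t',t)\|_{L^2\to L^2}\le 1+C(t'-t)^2$, whence
\[
\|E^{(0)}(\Omega;t,t_j)\|_{L^2\to L^2}\le\prod_{k=j+1}^{L}\bigl(1+C(t_k-t_{k-1})^2\bigr)\le\exp\bigl(C\omega(\Omega)(t-s)\bigr)\le e^{CT\delta},
\]
which is uniform in $\Omega$ with $\omega(\Omega)\le\delta$ and $0<t-s\le T$. Substituting (i) and (ii) into the telescoping identity completes the proof.
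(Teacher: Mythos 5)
Your proposal is correct and its substance coincides with the paper's: both reduce to (a) the short-time bound $\|E^{(0)}(t',t)-U(t',t)\|_{L^2\to L^2}\le C(t'-t)^2$, obtained via Duhamel applied to the excess $G^{(0)}=(i\hbar\partial_t+\tfrac12\hbar^2\Delta-V)E^{(0)}$ whose kernel you have computed correctly (and indeed $\Delta_x S-d/(t-s)=(t-s)\Delta_x\omega$, matching \eqref{defgn}), and both rest on exactly the technical machine you point at but do not supply: Kato--Sobolev regularity of $S$ from the short-time Hamiltonian flow (Section~3, Propositions~\ref{pro4}--\ref{pro7}) feeding Boulkhemair's $L^2$-continuity of oscillatory integral operators (Proposition~\ref{pro3}) to turn the $O(t-s)$ size of $\Delta_x\omega$ in $\hul^{d+1}$ into $\|G^{(0)}\|_{L^2\to L^2}=O(\hbar(t-s))$.

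Where you genuinely diverge is the global assembly. You use the telescoping identity plus a uniform stability bound on the truncated products $\|E^{(0)}(\Omega;t,t_j)\|\le C$, and your derivation of that stability bound invokes \emph{unitarity} of $U(t,s)$: the inequality $\|E^{(0)}(t',t)\|\le 1+C(t'-t)^2$, and hence the $\exp(C\omega(\Omega)(t-s))$ product bound, collapses if one only knows $\|U\|\le C_0$ with $C_0>1$ (the product would then pick up an uncontrolled $C_0^{L-j}$). The paper instead expands $\prod_k\bigl(U(t_k,t_{k-1})+R^{(0)}(t_k,t_{k-1})\bigr)-U(t,s)$ multinomially, groups contiguous $U$-blocks using the evolution law, and sums; each term is controlled by \eqref{eq18} and \eqref{chiave}, so only $\|U(t,s)\|\le C$ — the content of Proposition~\ref{prop8} — is needed, and the combinatorics are delegated to Fujiwara's Lemma~3.2. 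Unitarity of $U$ is of course true here (real potential, $L^2$-norm conservation), so your argument is valid; but you should be aware you are using a fact the paper neither proves nor needs, and that the multinomial scheme is slightly more robust in dispensing with it.
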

We notice that the rate of convergence of $E^{(0)}(\Omega,t,s)$ is the same as that which appears in \cite{fujiwara1,fujiwara2} for smooth potentials.\par
Let us observe that usually non-smooth potentials, even more singular than those considered above, are successfully treated as a perturbation. For example, if $V(x)$ is a time-independent potential in $L^\infty(\R^d)$ (or in dimension $d=3$, $V\in L^2(\R^3)+L^\infty(\R^3)$) one has the Trotter formula (cf.\ \cite[Theorem X.66]{reed}, \cite{schulman})
\[
e^{-\frac{i}{\hbar}tH}=s-\lim_{n\to\infty} \Big(e^{-\frac{it}{\hbar n}H_0} e^{-\frac{it}{\hbar n}V}\Big)^n 
\]
where $H=H_0+V(x)$, $H_0=-(1/2)\hbar^2\Delta$. However, we should notice that the operators $\big(e^{-\frac{it}{\hbar n}H_0} e^{-\frac{it}{\hbar n}V}\big)^n$ are in general  different from the approximations $E^{(0)}(\Omega,t,s=0)$ in \eqref{zero0} and, most importantly, the limit above holds only in the {\it strong} operator topology (the Trotter formula is also valid with norm convergence in some nontrivial circumstances, but for parabolic equations; cf.\ \cite{ichinose5}). Instead the main point in Theorem \ref{mainteo} is the convergence in the norm operator topology. It seems that this issue has never been considered for unbounded and time-dependent non-smooth potentials.\par
We also like to mention the approach to path integrals as infinite dimensional Fresnel integrals, developed by S. Albeverio and coworkers (see\cite{albeverio,mazzucchi} and the references therein), which allows one to treat potentials of the type ``polynomial $+$ Fourier transform of a finite measure''. However, again, it does not seem that the authors consider the problem of the convergence in the norm operator topology.  \par\smallskip
The choice of the Sobolev exponent $d+1$ in \eqref{due} is justified as follows. All the known results about the continuity in $L^2(\rd)$ of oscillatory integral operators such as \eqref{ezero}, with non-smooth phase (and amplitude) seem to require that the second derivatives of the phase have, roughly speaking, at least $s$ additional derivatives both with respect to $x$ and $y$, for some $s>d/2$ (when the derivatives are meant in the $L^2$-Sobolev scale; see \cite{Boulk,ruz} and the references therein). This fact is at least heuristically related to the known counterexamples about the continuity in $L^2$ of pseudodifferential operators (see e.g.\ \cite[Remark 2]{Boulk}), since often the second derivatives of the phase play the same role in the estimates as the amplitude; cf.\ \eqref{defgn} below. Now, under the assumption (A) we will prove that $S(t,s,x,y)$  has in fact second space derivatives in $H^{d+1}_{ul}(\rdd)$, hence possessing $d/2+1/2$ additional derivatives both with respect to $x$ and $y$. One could probably refine the result considering in \eqref{due} the fractional Sobolev space $H^s_{ul}(\rd)$ for some $s>d$, but we preferred to avoid further technicalities. 
\par
In the proof we follow the same strategy as in the case of smooth potentials \cite{fujiwara1,fujiwara2} but we need, as a preliminary result, a refined short time analysis of the Hamiltonian flow in such low regularity spaces. This represents, in fact, the main technical issue of the paper and is achieved via a priori estimates using, as tools, compositions and inverse mapping theorems for Sobolev mappings \cite{campbel,inci}. Also, the continuity results for oscillatory integral operators with non-smooth phase and amplitude proved in \cite{Boulk} will play a key role.\par
We observe that, under additional space regularity for the potential, we can prove similar results for the convergence of higher order parametrices, where powers of $\hbar$ appear in the right-hand side of \eqref{tre}. This seems of particular interest in view of the semiclassical approximation and will be studied in Section 5.\par
As a final remark let us mention a different yet non-perturbative approach, where the Schr\"odinger propagator is constructed by superposition of wave packets \cite{cnr,cnr10, kt,marzuola,tataru}. That approach would be certainly worth investigating in connection with path integrals, especially for classes of potentials which locally have the regularity of a function whose Fourier transform is in $L^1(\rd)$ (cf.\ \cite{cgnr2,cnr10}); however the parametrices in that case do not have anymore the oscillatory integral form in \eqref{ezero}, and the problem of convergence would concern a different type of approximations. \par\medskip
The paper is organized as follows. In Section 2 we prove some preliminary results about the composition and the inverse of Kato-Sobolev mappings, and recall a sufficient condition for the continuity in $L^2$ of oscillatory integral operators. Section 3 is devoted to a detailed short time analysis of the Hamiltonian flow, the focus being on the Sobolev regularity issue. In Section 4 we prove Theorem \ref{mainteo}. Finally in Section 5 we refine the above result for higher order parametrices, provided $V$ itself is more regular. \par

\section{Preliminary results}
\subsection{Sobolev spaces} We adopt the usual notation $H^{\kappa}(B)=W^{\kappa,2}(B)$ and $H^\kappa(\rd)$, $\kappa\in\bN$, for the $L^2$-based Sobolev spaces on an open ball $B\subset\rd$ or on $\rd$, respectively. We also consider the fractional Sobolev spaces $H^s(\rd)$, with $s\in \R$. \par
In the sequel we will often use the Gagliardo-Nirenberg-Sobolev inequality (see e.g.\ \cite{nir})
\begin{equation}\label{gns}
\|\partial^\alpha f\|_{L^{p}(B)}\leq C \|f\|_{L^\infty(B)}^{1-|\alpha|/\kappa}\,\|f\|_{H^\kappa(B)}^{|\alpha|/\kappa}
\end{equation}
valid for $|\alpha|\leq \kappa$, $1/p=|\alpha|/(2\kappa)$, where $B$ is any open ball of radius $1$ in $\rd$, for a constant $C>0$ independent of $B$. \par

\subsection{Kato-Sobolev spaces}
For $\kappa\in\N$ we consider the space $\hul^{\kappa}(\rd)$ of functions $f$ in $L^1_{loc}(\rd)$ satisfying 
\[
\|f\|_{\hul^{\kappa}(\rd)}:=\sup_{B}\|f\|_{H^{\kappa}(B)}=\sup_{B}\sup_{|\alpha|\leq\kappa}\|\partial^\alpha f\|_{L^2(B)}<+\infty
\]
where the supremum is made on all open balls $B\subset\rd$ of radius $1$, and the derivatives are meant in the distribution sense. In fact, considering balls of any fixed radius $r>0$ would give equivalent norms. \par
 More generally, for every $s\in\R$ one defines the spaces $\hul^s(\rd)$ of temperate distributions $f\in\cS'(\rd)$ such that 
\[
\|f\|_{\hul^s(\rd)}:=\sup_{y\in\R^d}\|\chi(\cdot-y)f\|_{H^s(\rd)}<+\infty,
\]
where $\chi\in\ C^\infty_c(\rd)\setminus\{0\}$. Changing $\chi$ gives equivalent norms, and  it is also easy to see that this definition reduces to that above when $s$ is a non-negative integer.
Moreover if $s>d/2$, $\hul^s(\rd)\subset C(\rd)\cap L^\infty(\rd)$ is a Banach algebra \cite{arsu,Boulk,kato}. \par\medskip
In Assumption (A) in Introduction we used mixed norm spaces which are defined precisely as follows: if $\kappa\in\bN$ we denote by $L^\infty(\R;\hul^\kappa(\rd))$ the space of functions $f(t,x)$ in $L^1_{loc}(\R\times\rd)$ such that their distribution derivatives $\partial^\alpha_x f$ for $|\alpha|\leq\kappa$ are in $L^1_{loc}(\R\times\rd)$ and moreover 
\[
\|f\|_{L^\infty(\R;\hul^\kappa(\rd))}:=\esssup\limits_{t\in\R}\| f(t,\cdot)\|_{\hul^\kappa(\rd)}<+\infty.
\]
\par
We will need the following property about scaling.
\begin{proposition}
\label{pro0}
Let $\kappa\in\bN$. With the notation $a_t(x,y)=a(x,ty)$ for $0\leq t\leq 1$, $x,y\in\rd$, there exists a constant $C>0$ independent of $t$ and $a$ such that 
\[
\|a_t\|_{\hul^\kappa(\rdd)}\leq C \|a\|_{\hul^{\kappa+[d/2]+1}(\rdd)}.
\] 
\end{proposition}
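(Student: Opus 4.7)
My plan is to reduce the estimate to a uniformly local form of the Sobolev trace theorem, exploiting the fact that $[d/2]+1$ is the smallest integer strictly larger than $d/2$, which is exactly the critical index for the trace of an $H^s(\rdd)$-function onto a hyperplane of codimension $d$.

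First, a chain-rule computation gives, for every multi-index $\alpha=(\alpha_x,\alpha_y)\in\bN^{2d}$ with $|\alpha|\leq\kappa$,
\[
\partial^\alpha a_t(x,y)=t^{|\alpha_y|}\,(\partial^\alpha a)(x,ty),
\]
and the prefactor $t^{|\alpha_y|}\leq 1$ is harmless. The task therefore reduces to bounding, uniformly in $(x_0,y_0)\in\rdd$,
\[
I_\alpha(x_0,y_0):=\int_{B_1(x_0,y_0)}|(\partial^\alpha a)(x,ty)|^2\,dx\,dy.
\]

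Next I would use the inclusion $B_1(x_0,y_0)\subseteq B_1(x_0)\times B_1(y_0)$ and Fubini to obtain
\[
I_\alpha(x_0,y_0)\leq\int_{B_1(y_0)}\|(\partial^\alpha a)(\cdot,ty)\|_{L^2(B_1(x_0))}^2\,dy.
\]
For every $y\in B_1(y_0)$ the point $y^\ast:=ty\in\rd$ is arbitrary, so everything hinges on a uniform slice estimate of the form
\[
\sup_{y^\ast\in\rd}\|b(\cdot,y^\ast)\|_{L^2(B_1(x_0))}\leq C\,\|b\|_{\hul^{[d/2]+1}(\rdd)},
\]
with $b:=\partial^\alpha a$ and $C$ independent of both $x_0$ and $y^\ast$.

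This slice estimate is precisely the localized Sobolev trace theorem applied on the $\rdd$-ball $B_1(x_0,y^\ast)$, whose slice at height $y^\ast$ is exactly $B_1(x_0)\subset\rd$: since $[d/2]+1>d/2$, the trace from $H^{[d/2]+1}(B_1(x_0,y^\ast))$ to $L^2(B_1(x_0))$ is continuous, with a constant depending only on the radius, hence independent of $(x_0,y^\ast)$ by translation invariance. Combining this with $\|\partial^\alpha a\|_{\hul^{[d/2]+1}}\leq\|a\|_{\hul^{|\alpha|+[d/2]+1}}\leq\|a\|_{\hul^{\kappa+[d/2]+1}}$, summing over $|\alpha|\leq\kappa$ and taking the supremum over $(x_0,y_0)$ yields the conclusion. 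The single point that needs care is the uniformity in $y^\ast$ of the trace constant, which follows from translation invariance of Lebesgue measure but is precisely what forces the full uniformly local norm, rather than a merely local one, on the right-hand side.
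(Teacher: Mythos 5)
Your proof is correct, and it takes a genuinely different route from the paper's. You reduce the problem to the Sobolev trace theorem for the restriction of $H^s(\rdd)$-functions onto a codimension-$d$ affine subspace, invoking precisely the fact that $[d/2]+1$ is the smallest integer exceeding the critical trace index $d/2$; you then localize via the uniformly local norm, with the uniformity of the trace constant coming from translation invariance (in practice one extends from the $2d$-ball to the full space, or cuts off by a bump function, before applying the whole-space trace inequality — a small technical point you correctly flag). The paper instead proceeds by a direct scaling computation at the level of a single ball of radius $t$, splitting into the cases $|\beta|\geq d/2$ and $|\beta|<d/2$, and in the second case trading the small measure of the scaled ball against a higher $L^p$ norm via H\"older and then Gagliardo--Nirenberg--Sobolev. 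Your approach has the advantage of being more conceptual — it explains at a glance why the loss of $[d/2]+1$ derivatives is the right (and, by the paper's own Remark, sharp) threshold — whereas the paper's argument is more self-contained, leaning only on the GNS inequality already used throughout Section~2 and avoiding any appeal to trace or extension theorems.
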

Here $[d/2]$ stands for the integer part of $d/2$.
\begin{proof}
Let $|\alpha|+|\beta|\leq \kappa$ and $B$ be an open ball of radius $1$ in $\rd_y$. We have 
\[
\|\partial^\alpha_x\partial^\beta_y a_t(x,\cdot)\|_{L^2(B)}=t^{|\beta|-d/2}\|\partial^\alpha_x\partial^\beta_y a(x,\cdot)\|_{L^2(\tilde{B})}
\]
where $\tilde{B}=\{ty:\,y\in B\}$ is a ball of radius $t$.
Now, if $|\beta|\geq d/2$ we have $t^{|\beta|-d/2}\leq 1$ and one concludes easily by taking the norm in $L^2(B')$ of this expression, where $B'\subset \rd_x$ is any ball of radius 1. \par
If instead $|\beta|<d/2$, hence $|\beta|<m:=[d/2]+1$ we set $1/p=|\beta|/(2m)$ and we continue the above estimate, by H\"older's inequality, as 
\begin{align*}
\|\partial^\alpha_x\partial^\beta_y a_t(x,\cdot)\|_{L^2(B)}&\leq C t^{|\beta|-d/2+d(1/2-1/p)}\|\partial^\alpha_x\partial^\beta_y a(x,\cdot)\|_{L^p(\tilde{B})}\\
&= C  t^{|\beta|(1-d/(2m))}\|\partial^\alpha_x\partial^\beta_y a(x,\cdot)\|_{L^p(\tilde{B})}\\
&\leq C t^{|\beta|(1-d/(2m))}\|\partial^\alpha_x\partial^\beta_y a(x,\cdot)\|_{L^p(B'')}
\end{align*}
where $B'' \supseteq\tilde{B}$ is the ball of radius 1 with the same center as $\tilde{B}$.
By the Gagliardo-Nirenberg-Sobolev inequality \eqref{gns} this last expression is dominated by 
\[
C' t^{|\beta|(1-d/(2m))}\|\partial^\alpha_x a(x,\cdot)\|_{L^\infty(B'')}^{1-|\beta|/m}\|\partial^\alpha_x a(x,\cdot)\|_{H^m(B'')}^{|\beta|/m}
\]
which in turn is dominated (since $m>d/2$, and therefore $t^{|\beta|(1-d/(2m))}\leq 1$ and $H^m(B'')\hookrightarrow L^\infty(B'')$) by 
\[
C'' \|\partial^\alpha_x a(x,\cdot)\|_{H^m(B'')},
\]
and one concludes as above.
\end{proof}
\begin{remark}
One easily sees that a loss of $d/2$ derivatives in Proposition \ref{pro0} is inevitable. In fact, suppose that, for $|\alpha|=\kappa$ and some $r\geq0$ the following estimate holds:
\[
\|\partial^\alpha_x a_t\|_{L^2(B\times B)} \leq C \|a\|_{\hul^{\kappa+r}(\rdd)}
\]
where $B=\{x\in\rd:\ |x|<1\}$. We test this estimate with $a(x,y)=a^{(1)}(\lambda x) a^{(2)}(\lambda y)$, $\lambda=1/t\geq 1$, where $a^{(1)}(x)$, $a^{(2)}(y)$ are in $C^\infty_c(B)\setminus\{0\}$. We obtain the estimate
\[
\|\partial^\alpha \big(a^{(1)}(\lambda\cdot)\big)\|_{L^2(B)}\| a^{(2)}\|_{L^2(B)} \leq C' \|a^{(1)}(\lambda \cdot)a^{(2)}(\lambda \cdot)\|_{H^{\kappa+r}(\rdd)}
\]
and therefore as $\lambda\to+\infty$ we deduce $\lambda^{\kappa-d/2}\leq C'' \lambda^{\kappa+r-d}$, which implies $r\geq d/2$.
\end{remark}

We are particularly interested in the issue of the composition and inverse mapping theorem for Sobolev mappings. 
\begin{proposition}\label{pro1}
Let $g:\rd\to\rd$ be a globally bi-Lipschitz map, i.e.\ satisfying
\begin{equation}\label{bilip}
C_0^{-1}|x-y|\leq |g(x)-g(y)|\leq C_0 |x-y|,\quad x,y\in\rd
\end{equation}
for some constant $C_0>0$.\par Let $\kappa\in\bN$.
Then, if $f\in \hul^{\kappa}(\rd)\cap L^\infty(\rd)$ and $Dg\in\hul^{\kappa}(\rd;\R^{d\times d})$ we have $f\circ g\in \hul^{\kappa}(\rd)\cap L^\infty(\rd)$. 
\end{proposition}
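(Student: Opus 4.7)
The plan is to establish the $\hul^\kappa$-bound on $f\circ g$ by combining the Faà di Bruno formula, a bi-Lipschitz change of variables, and the Gagliardo--Nirenberg--Sobolev inequality \eqref{gns} so as to distribute the derivatives between $f$ and $Dg$ in an $L^p$-balanced way. Boundedness is trivial, since $\|f\circ g\|_\infty=\|f\|_\infty$; the issue is to control, uniformly over balls $B\subset\rd$ of radius $1$, the quantities $\|\partial^\alpha(f\circ g)\|_{L^2(B)}$ for $1\leq|\alpha|\leq\kappa$.

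First, I would expand $\partial^\alpha(f\circ g)$ by Faà di Bruno's formula (justified in the Sobolev setting by a standard mollification, since $Dg\in L^\infty$): it is a finite linear combination of terms of the form
\[
T(x)=(\partial^\beta f)(g(x))\,\prod_{j=1}^k\partial^{\gamma_j}g_{i_j}(x),\qquad 1\leq k=|\beta|\leq|\alpha|,\ |\gamma_j|\geq 1,\ \sum_j\gamma_j=\alpha.
\]
For each such $T$, I would apply Hölder with exponents $p_0,\dots,p_k$ satisfying $\sum 1/p_j=1/2$, chosen by setting $1/p_j=(|\gamma_j|-1)/(2\kappa)$ for $j\geq 1$. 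Then \eqref{gns} applied to $Dg\in L^\infty\cap\hul^\kappa$ (note $g$ is Lipschitz, so $\|Dg\|_\infty\leq C_0$) controls
\[
\|\partial^{\gamma_j}g_{i_j}\|_{L^{p_j}(B)}\leq C\,\|Dg\|_\infty^{1-(|\gamma_j|-1)/\kappa}\,\|Dg\|_{\hul^\kappa(\rd)}^{(|\gamma_j|-1)/\kappa},
\]
because $\partial^{\gamma_j}g_{i_j}$ is a derivative of order $|\gamma_j|-1\leq\kappa$ of a component of $Dg$.

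The remaining exponent is $1/p_0=1/2-(|\alpha|-k)/(2\kappa)=(\kappa-|\alpha|+k)/(2\kappa)$, and because $|\alpha|\leq\kappa$ one has $p_0\leq 2\kappa/k$. I would then bound the composed factor via the bi-Lipschitz change of variables: since \eqref{bilip} and Rademacher's theorem give $|\det Dg|\geq c(C_0,d)>0$ a.e.,
\[
\int_B|(\partial^\beta f)(g(x))|^{p_0}\,dx\leq c^{-1}\int_{g(B)}|\partial^\beta f(y)|^{p_0}\,dy.
\]
The set $g(B)$ is contained in a ball of radius $C_0$, hence covered by $N=N(C_0,d)$ balls of radius $1$; on each such ball I would use Hölder (legitimate because $p_0\leq 2\kappa/k$ and $g(B)$ has bounded measure) followed by \eqref{gns} applied to $f$:
\[
\|\partial^\beta f\|_{L^{2\kappa/k}(B')}\leq C\,\|f\|_\infty^{1-k/\kappa}\,\|f\|_{\hul^\kappa}^{k/\kappa}.
\]
Multiplying these estimates, summing over the finitely many terms produced by Faà di Bruno and over the finitely many radius-$1$ balls covering $g(B)$ gives a bound independent of $B$, which is the desired conclusion.

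The main difficulty lies in the choice of the Hölder exponents: one has to ensure the G--N interpolation applies simultaneously to $f$ and to each $\partial^{\gamma_j}g$, without assuming $\hul^\kappa$ is an algebra (which fails when $\kappa\leq d/2$). The inequality $|\alpha|\leq\kappa$ is precisely what guarantees $p_0\leq 2\kappa/k$ and thus that the change-of-variables step produces an integrable power of $|\partial^\beta f|$ on $g(B)$; this is the linchpin making the whole scheme work across all $\kappa\in\bN$ and all $d$.
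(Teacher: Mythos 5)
Your proof is correct and follows essentially the same route as the paper: Faà di Bruno expansion, Hölder with exponents $1/p_j=(|\gamma_j|-1)/(2\kappa)$ for the $Dg$-factors, a bi-Lipschitz change of variables for the composed factor, a finite cover of $g(B)$ by unit balls, and Gagliardo--Nirenberg--Sobolev applied to each factor. The only (inessential) difference is bookkeeping: the paper sets $1/p_0=|\sigma|/(2\kappa)$ and observes $\sum 1/p_j\leq 1/2$ directly, whereas you pad $1/p_0$ so the exponents sum to exactly $1/2$ and then reabsorb the slack via $L^{2\kappa/k}(B')\hookrightarrow L^{p_0}(B')$ on the bounded domain; these are the same estimate.
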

Here $Dg$ stands for the Jacobian matrix of $g$.
\begin{proof}
The result is a variant of the known composition formulas for Sobolev mappings (see e.g. \cite{campbel,inci} and the references therein). We provide a short proof for the benefit of the reader, since we did not find the statement exactly in the present form in the literature.\par
 It is clear that for every open ball $B$ of radius $1$, \[
 \|f\circ g\|_{L^2(B)}\leq C \|f\circ g\|_{L^\infty(B)}\leq C \|f\circ g\|_{L^\infty(\rd)}\leq C \|f\|_{L^\infty(\rd)}.
 \]
  To estimate the derivatives of $f\circ g$ we use the chain rule (for the justification of the chain rule at this regularity we refer to \cite{campbel}). Namely, let $g= (g_1,\ldots,g_d)$. We can write $\partial^\alpha (f\circ g)$, $1\leq |\alpha|\leq \kappa$, as a linear combination of terms of the form 
\[
\partial^\sigma f (g(x))\, \partial^{\mu_1}g_{j_1}(x) \ldots \partial^{\mu_{|\sigma|}}g_{j_{|\sigma|}}(x)
\]
where $1\leq |\sigma|\leq |\alpha|$, $|\mu_1|,\ldots,|\mu_{|\sigma|}|\geq 1$, $\mu_1+\ldots+\mu_{|\sigma|}=\alpha$, and $j_1,\ldots, j_{|\sigma|}\in\{1,\ldots,d\}$. \par
Now, given an open ball $B\subset\rd$ of radius $1$, we estimate the norm in $L^2(B)$ of this expression by H\"older's inequality as
\begin{equation}\label{fact}
\|(\partial^\sigma f)\circ g\|_{L^{p_0}(B)}\|\partial^{\mu_1}g_{j_1}\|_{L^{p_1}(B)}\ldots \|\partial^{\mu_{|\sigma|}}g_{j_{|\sigma|}}\|_{L^{p_{|\sigma|}}(B)},
\end{equation}
where we choose
\[
\frac{1}{p_0}=\frac{|\sigma|}{2\kappa},\quad \frac{1}{p_j}=\frac{|\mu_j|-1}{2\kappa},\ j=1,\ldots,|\sigma|.
\]
Observe that, in fact, 
\[
\sum_{j=0}^{|\sigma|}\frac{1}{p_j}=\frac{|\sigma|}{2\kappa}+\frac{|\alpha|-|\sigma|}{2\kappa}=\frac{|\alpha|}{2\kappa}\leq\frac{1}{2}.
\]
On the other hand, by the bi-Lipschitz assumption \eqref{bilip} we have, with $\tilde{B}=g(B)$,
\[
\|(\partial^\sigma f)\circ g\|_{L^{p_0}(B)}\leq C \|\partial^\sigma f\|_{L^{p_0}(\tilde{B})}.
\]
Since $\tilde{B}=g(B)$ can be covered by a fixed number of balls $B'$ of radius $1$, and by the Gagliardo-Nirenberg-Sobolev inequality \eqref{gns} we can continue the estimate as
\begin{align*}
\|(\partial^\sigma f)\circ g\|_{L^{p_0}(B)}&\leq C'\sup_{B'}\|\partial^\sigma f\|_{L^{p_0}(B')}\\
&\leq C'' \sup_{B'}\|f\|_{L^\infty(B')}^{1-|\sigma|/\kappa}\|f\|_{H^\kappa(B')}^{|\sigma|/\kappa}\\
&\leq C'' \|f\|_{L^\infty(\rd)}^{1-|\sigma|/\kappa}\|f\|_{\hul^\kappa(\rd)}^{|\sigma|/\kappa}.
\end{align*} 
Hence we obtain
\[
\sup_B \|(\partial^\sigma f)\circ g\|_{L^{p_0}(B)}<+\infty.
\]
Similarly we estimate the other factors in \eqref{fact}: for $j=1,\ldots,|\sigma|$ we have  $|\mu_j|\geq 1$, so that by the Gagliardo-Nirenberg-Sobolev inequality we obtain
\[
\|\partial^{\mu_j}g\|_{L^{p_j}(B)}\leq C\|Dg\|_{L^\infty(B)}^{1-(|\mu_j|-1)/\kappa}\|Dg\|_{H^\kappa(B)}^{(|\mu_j|-1)/\kappa}
\]
which is uniformly bounded with respect to $B$ by the assumptions on $g$ (observe that \eqref{bilip} implies $Dg\in L^\infty$).
\end{proof}

We also recall the following inverse mapping theorem.  
\begin{proposition}\label{pro2} Let $\kappa\in\bN$. Let $g:\rd\to\rd$ be a globally bi-Lipschitz map, i.e. satisfying \eqref{bilip}, with $Dg\in \hul^{\kappa}(\rd;\R^{d\times d})$. Then $Dg^{-1}\in \hul^{\kappa}(\rd;\R^{d\times d})$ as well.
 \end{proposition}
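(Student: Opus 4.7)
The starting point is Rademacher's theorem: since $g$ is bi-Lipschitz, it is differentiable almost everywhere, and the bi-Lipschitz bound \eqref{bilip} forces $Dg$ to be a.e.\ invertible with $|\det Dg|\geq C_0^{-d}$ and $|(Dg)^{-1}|\leq C_0$. The chain rule then gives the a.e.\ identity
\[
Dg^{-1}(y)=\bigl(Dg(g^{-1}(y))\bigr)^{-1}=B(g^{-1}(y)),\qquad B:=(Dg)^{-1},
\]
so the goal reduces to showing that $B\circ g^{-1}\in\hul^\kappa(\rd;\R^{d\times d})$.

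The first step is to verify that $B$, viewed as a matrix-valued function of $x$, already belongs to $\hul^\kappa$. Writing $B=\operatorname{adj}(Dg)/\det(Dg)$, the numerator is a polynomial in the entries of $Dg\in\hul^\kappa\cap L^\infty$; the same H\"older-plus-GNS argument used in the proof of Proposition \ref{pro1} shows that $\hul^\kappa\cap L^\infty$ is an algebra, so both $\operatorname{adj}(Dg)$ and $\det Dg$ lie in $\hul^\kappa\cap L^\infty$. Since $|\det Dg|$ is bounded away from $0$ a.e., one obtains $1/\det Dg$ by composing the smooth function $t\mapsto 1/t$ (defined on a compact interval away from the origin) with $\det Dg$; applying Fa\`a di Bruno together with the same GNS bounds yields $1/\det Dg\in\hul^\kappa$, whence $B\in\hul^\kappa$.

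The remaining step is to deduce $B\circ g^{-1}\in\hul^\kappa$. The natural move is to invoke Proposition \ref{pro1} with $f=B$ and the map $g^{-1}$ (bi-Lipschitz with constant $C_0$), but its hypothesis requires $D(g^{-1})\in\hul^\kappa$, which is exactly the conclusion sought. I would close this loop by induction on $\kappa$: the base $\kappa=0$ is trivial since $Dg^{-1}\in L^\infty\subset L^2_{ul}$. For the inductive step, assuming $Dg^{-1}\in\hul^{\kappa-1}$, one expands $\partial^\alpha(B\circ g^{-1})$ for $|\alpha|=\kappa$ via the chain rule into a sum of products $(\partial^\sigma B)\circ g^{-1}\cdot\prod_i\partial^{\mu_i}g^{-1}_{j_i}$ (with $1\leq|\sigma|\leq\kappa$, $|\mu_i|\geq 1$, $\sum\mu_i=\alpha$), and estimates each factor by H\"older and GNS as in the proof of Proposition \ref{pro1}.

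The main obstacle is the borderline case $|\sigma|=1$, in which one of the $|\mu_i|$ equals $\kappa$: the inductive hypothesis places $\partial^{\mu_i}g^{-1}$ only in $L^2_{ul}$, while interpolation gives $(\partial B)\circ g^{-1}$ at best in $L^{2\kappa}_{ul}$, so the product falls just short of $L^2_{ul}$. To get around this, I would rewrite the top-order derivative of $g^{-1}$ using the recursive identity obtained by differentiating $g(g^{-1}(y))=y$ iteratively, which expresses $\partial^\alpha g^{-1}$ algebraically in terms of $Dg^{-1}$, derivatives $(\partial^\sigma Dg)\circ g^{-1}$ with $|\sigma|\geq 2$, and lower-order derivatives $\partial^\mu g^{-1}$ with $|\mu|\leq|\alpha|-1$. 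The second type of factor is placed in $\hul^{\kappa-|\sigma|}$ by applying Proposition \ref{pro1} at the already-available level $\kappa-1$ to $\partial^\sigma Dg\in\hul^{\kappa-|\sigma|}\cap L^\infty$, the third is controlled by the inductive hypothesis, and the resulting H\"older balance now lands inside $L^2_{ul}$, completing the induction.
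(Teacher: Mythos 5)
The paper's own proof is essentially a citation: it invokes the inverse mapping theorem for Sobolev mappings of Campbell--Hencl--Konopeck\'y \cite[Theorem 1.1]{campbel} and observes that, since $g$ is globally bi-Lipschitz and $Dg\in\hul^\kappa$, an inspection of that proof gives constants uniform over balls of a fixed radius. You instead attempt a self-contained reproof. That is a legitimate strategy, and your first two steps are sound: the identity $Dg^{-1}=B\circ g^{-1}$ with $B=(Dg)^{-1}$ (Rademacher plus the bi-Lipschitz bound), and the membership $B=\operatorname{adj}(Dg)/\det(Dg)\in\hul^\kappa\cap L^\infty$ via the algebra property and Moser-type estimates. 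You also correctly identify that the circularity in the third step sits in the borderline Fa\`a di Bruno term with $|\sigma|=1$. However, the way you propose to break the circularity contains two genuine gaps.

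First, you write that $\partial^\sigma Dg\in\hul^{\kappa-|\sigma|}\cap L^\infty$ for $|\sigma|\geq 2$. The $L^\infty$ part is false in general: only $Dg$ itself is bounded (that is what the bi-Lipschitz hypothesis gives), and for $\kappa$ small compared to $d$ the higher derivatives of $Dg$ are typically unbounded. Hence Proposition~\ref{pro1}, which requires the outer function to be in $L^\infty$, cannot be applied as you describe.

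Second, and more seriously, the H\"older balance does \emph{not} close if you unfold the recursion only once and then appeal to the inductive hypothesis for the remaining factors $\partial^\mu g^{-1}$ with $2\leq|\mu|\leq|\alpha|-1$. Take $\kappa=3$, $|\alpha|=3$. The borderline term is $(\partial B)\circ g^{-1}\cdot\partial^\alpha g^{-1}$. Writing $\partial^\alpha g^{-1}=\partial^{\alpha'}(B\circ g^{-1})$ with $|\alpha'|=2$ and expanding once more, the worst sub-term is
\[
(\partial B)\circ g^{-1}\cdot(\partial B)\circ g^{-1}\cdot\partial^{\alpha'}g^{-1},\qquad |\alpha'|=2.
\]
Now GNS applied to $B\in\hul^\kappa\cap L^\infty$ places each $(\partial B)\circ g^{-1}$ in $L^{p}_{ul}$ with $1/p=1/(2\kappa)=1/6$, while GNS applied to $Dg^{-1}\in\hul^{\kappa-1}$ (the inductive hypothesis) places $\partial^{\alpha'}g^{-1}=\partial(Dg^{-1})$ in $L^{q}_{ul}$ with $1/q=1/(2(\kappa-1))=1/4$. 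The exponents sum to $1/6+1/6+1/4=7/12>1/2$, so the product is not in $L^2_{ul}$. The same imbalance persists for all $\kappa\geq 3$ whenever a factor $\partial^\mu g^{-1}$ with $2\leq|\mu|\leq\kappa-1$ survives.

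The fix is to iterate the recursive identity $\partial_m Dg^{-1}=-(Dg^{-1})\bigl[(D^2g)\circ g^{-1}\bigr](Dg^{-1})^2$ all the way down, so that at the end of the unfolding no derivative of $g^{-1}$ of order $\geq 2$ remains: a general term of $\partial^\alpha Dg^{-1}$ then has the form
\[
\text{(product of undifferentiated, bounded $Dg^{-1}$ factors)}\cdot\prod_{p}\bigl(\partial^{\sigma_p}Dg\bigr)\circ g^{-1},\qquad \sum_p|\sigma_p|=|\alpha|,\ |\sigma_p|\geq 1.
\]
Now GNS applied to $Dg\in\hul^\kappa\cap L^\infty$ places each $\bigl(\partial^{\sigma_p}Dg\bigr)\circ g^{-1}$ in $L^{p_p}_{ul}$ with $1/p_p=|\sigma_p|/(2\kappa)$, and the exponents sum to $|\alpha|/(2\kappa)\leq 1/2$ exactly. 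This does close the induction, and no $L^\infty$ hypothesis on higher derivatives of $Dg$ is ever needed. (An alternative route, avoiding the explicit unfolding, is an a priori estimate: the naive Fa\`a di Bruno expansion yields $\|Dg^{-1}\|_{\hul^\kappa}\leq A+B\|Dg^{-1}\|_{\hul^\kappa}^{(\kappa-1)/\kappa}$, whose sublinear exponent gives the desired bound once justified by regularization.) Either way, your outline as written does not quite reach the conclusion.
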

\begin{proof}
By applying the inverse mapping theorem for Sobolev mappings as stated e.g.\ in \cite[Theorem 1.1]{campbel} (with $m=\kappa+1$ and $p=2$) one obtains that $Dg^{-1}\in H^\kappa(B)$ for every open ball $B\subset \rd$, even if $g$ were only locally bi-Lipschitz. Actually, since $g$ is globally bi-Lipschitz and $Dg$ is in $H^\kappa$ locally uniformly, an inspection of the proof of \cite[Theorem 1.1]{campbel} shows that the desired estimates are uniform with respect to the balls $B$, provided these latter have the same fixed radius. This concludes the proof.

\end{proof}
\begin{remark}
It also follows from the proof of Proposition \ref{pro2} that we have, in fact, $\|Dg^{-1}\|_{\hul^{\kappa}}\leq C$ for a constant $C$ depending only on $C_0,k,d$ and upper bounds for $\|Dg\|_{\hul^{\kappa}}$, where $C_0$ in the constant in \eqref{bilip}. A similar remark applies to Proposition \ref{pro1}. In the sequel we will apply freely these ``uniform versions'' of Propositions \ref{pro1} and \ref{pro2} without further comments. 
\end{remark}

\subsection{Oscillatory integral operators with non-smooth phase and amplitude}
Let $A$ be an oscillatory integral operator of the form 
\begin{equation}\label{eq19}
Af(x)=\int_{\rd} e^{i\lambda S(x,y)} a(x,y)f(y)\, dy,\quad \lambda\geq \lambda_0>0.
\end{equation}
Several conditions on the phase function $S(x,y)$ and the amplitude $a(x,y)$ are known for $A$ to be bounded on $L^2(\rd)$; see e.g. \cite{ruz} and the references therein. Here we recall the following result from \cite[Corollary 1]{Boulk}.
\begin{proposition}\label{pro3}
Let $A$ be the operator in \eqref{eq19}, and suppose $S$ real-valued with $\partial^\alpha S \in \hul^{s}(\rdd)$ for $|\alpha|=2$, and $a\in \hul^{s}(\rdd)$ for some $s>d$. Assume, moreover, that 
\begin{equation}\label{eq20}
\Big|{\rm det}\, \frac{\partial^2 S}{\partial x\partial y}(x,y)\Big|\geq\tilde{\delta}\quad x,y\in\rd
\end{equation}
for some $\tilde{\delta}>0$. \par
Then $A$, initially defined on Schwartz functions, extends to a bounded operator on $L^2(\rd)$. In fact there exists a constant $C>0$ independent of $\lambda\geq \lambda_0$, $\tilde{\delta}$, $S$ and $a$ such that
\[
\|A\|_{L^2\to L^2}\leq C\tilde{\delta}^{-1}\lambda^{-d/2}\exp(C\|D^2S\|_{\hul^{s}})\|a\|_{\hul^{s}}.
\]
\end{proposition}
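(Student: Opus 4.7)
The strategy I would adopt is the classical $TT^*$/Schur argument, with the only novelty being the careful bookkeeping of Kato--Sobolev norms through a non-smooth change of variable. First I would reduce matters to bounding the kernel of $AA^*$,
\[
K(x,x')=\int_{\rd}\me^{i\lambda(S(x,y)-S(x',y))}a(x,y)\overline{a(x',y)}\,dy,
\]
by Schur's test, using $\|A\|^2_{L^2\to L^2}=\|AA^*\|_{L^2\to L^2}$. The phase difference I would linearise as $(x-x')\cdot\xi(x,x',y)$ with $\xi(x,x',y):=\int_0^1\nabla_x S(x'+t(x-x'),y)\,dt$; the Jacobian $\partial_y\xi$ is the corresponding average of $\partial_x\partial_y S$ and is continuous in $(x,x',y)$ because the embedding $\hul^s(\rdd)\hookrightarrow C\cap L^\infty$ holds for $s>d$. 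Combined with the hypothesis $|\det\partial_x\partial_y S|\geq\tilde\delta$, this continuity produces a threshold $\eta=\eta(\tilde\delta,\|D^2S\|_{\hul^s})>0$ on which $|\det\partial_y\xi(x,x',y)|\geq\tilde\delta/2$.

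On the near-diagonal region $|x-x'|\leq\eta$ I would then apply Propositions \ref{pro1} and \ref{pro2} to promote $y\mapsto\xi(x,x',\cdot)$ to a global $\hul^s$-diffeomorphism of $\rd$ with inverse Jacobian bounded in $\hul^s(\rd)$ by $C\tilde\delta^{-1}\exp(C\|D^2S\|_{\hul^s})$. The change of variables $y\to\xi$ then gives
\[
K(x,x')=\int_{\rd}\me^{i\lambda(x-x')\cdot\xi}b(x,x',\xi)\,d\xi,
\]
with $\|b(x,x',\cdot)\|_{\hul^s(\rd)}\leq C\tilde\delta^{-1}\exp(C\|D^2S\|_{\hul^s})\|a\|_{\hul^s}^2$, obtained by two applications of Proposition \ref{pro1} to the compositions $a(x,y(x,x',\cdot))$ and $a(x',y(x,x',\cdot))$ together with the algebra property of $\hul^s$, valid since $s>d>d/2$. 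Recognising $K(x,x')$ as (a multiple of) the Fourier transform of $b(x,x',\cdot)$ at the frequency $\lambda(x-x')/(2\pi)$, substituting $h=\lambda(x-x')$, and using the elementary bound $\|\hat u\|_{L^1(\rd)}\leq C\|u\|_{H^s(\rd)}$ valid for $s>d/2$ (by Cauchy--Schwarz against $\langle\xi\rangle^{-s}$), I would obtain
\[
\int_{|x-x'|\leq\eta}|K(x,x')|\,dx'\leq C\lambda^{-d}\tilde\delta^{-2}\exp(C\|D^2S\|_{\hul^s})\|a\|_{\hul^s}^2,
\]
which is the square of the claimed norm bound.

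The residual contribution from $|x-x'|>\eta$ I would treat separately: there $|\lambda(x-x')|\geq \eta\lambda_0$, and via a partition of unity that locally reduces $y\mapsto\xi$ to a diffeomorphism, repeated integration by parts in $\xi$ against the linear phase produces an arbitrarily large polynomial gain $\lambda^{-N}$, easily absorbed. The main obstacle I anticipate is keeping the Kato--Sobolev norms uniformly bounded through the non-smooth change of variable and the compositions with $a$; this is exactly where the threshold $s>d$ (rather than the $s>d/2$ one would naively expect from a one-variable Fourier embedding) is forced, both for the continuity that stabilises the determinant condition and for the algebra property of $\hul^s$ on the $2d$-dimensional variable $(x,y)$. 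The off-diagonal part, routine for smooth phases, requires at Sobolev regularity a careful bookkeeping of how many times one is allowed to integrate by parts, but this is only a mild additional complication once the near-diagonal estimate is in place.
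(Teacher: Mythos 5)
The paper does not prove this proposition: it is quoted as Corollary~1 of Boulkhemair \cite{Boulk}. So the comparison is between your sketch and Boulkhemair's actual argument, which does indeed run through a $TT^*$/almost-orthogonality scheme — but with an indispensable preliminary step that your plan skips, and that step is what makes the Kato--Sobolev setting work at all.

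The gap is the absence of a spatial block decomposition \emph{before} passing to the kernel of $AA^*$. With $a\in\hul^s(\rdd)$ and no decay at infinity, the function $y\mapsto a(x,y)$ need not be in $L^2(\rd)$, so the kernel
$K(x,x')=\int_{\rd}e^{i\lambda(S(x,y)-S(x',y))}a(x,y)\overline{a(x',y)}\,dy$
is not defined pointwise (already $K(x,x)=\int|a(x,y)|^2\,dy$ may diverge), and Schur's test cannot be applied directly to $AA^*$. The same problem reappears one step later: after your change of variables $y\to\xi$ the amplitude $b(x,x',\cdot)$ lies in $\hul^s(\rd)$ but not in the global space $H^s(\rd)$, and the bound $\|\widehat u\|_{L^1(\rd)}\leq C\|u\|_{H^s(\rd)}$ you invoke simply does not hold for $\hul^s$: a bounded nonzero function such as $u\equiv 1$ belongs to every $\hul^s(\rd)$ but its Fourier transform is a Dirac mass, not an $L^1$ function. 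The $\hul^s$ spaces control \emph{local} Sobolev norms uniformly, which is compatible with no decay whatsoever, so no purely Fourier-side $L^1$ estimate can be extracted from an $\hul^s$ bound without a spatial cut-off.

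The fix, which is the core of the Boulkhemair-type argument, is to introduce a uniformly locally finite partition of unity $\sum_{k\in\zd}\varphi(\cdot-k)^2=1$ and write $A=\sum_{j,k\in\zd}A_{j,k}$ with $A_{j,k}f(x)=\varphi(x-j)\int e^{i\lambda S(x,y)}a(x,y)\varphi(y-k)f(y)\,dy$. Each block has a compactly supported amplitude whose $H^s(\rdd)$ norm is controlled by $\|a\|_{\hul^s}$, and on each block your change-of-variables and Fourier argument is legitimate; the global bound then follows from the Cotlar--Stein lemma (or, after verifying decay of $\|A_{j,k}A_{j',k'}^*\|$ and $\|A_{j,k}^* A_{j',k'}\|$ in $|j-j'|+|k-k'|$, from Schur's test on the discrete index set). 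Your near/far-diagonal split in $x,x'$ is a reasonable intuition but it acts in the wrong variables: the localization that is forced by $\hul^s$ regularity is a localization in both $x$ and $y$ at scale $O(1)$, not a split of $|x-x'|$ at scale $\eta$. Once the block decomposition is in place, your remaining ingredients (bi-Lipschitz change of variables validated by the determinant condition, Propositions \ref{pro1} and \ref{pro2}, the algebra property of $\hul^s$, and the $L^1$ Fourier transform bound from $H^s$ with $s>d/2$) are the right ones and can be assembled essentially as you describe. The threshold $s>d$ is then used not so much for the Fourier embedding (which only needs $s>d/2$ in each of $x$ and $y$) as for the chain-rule and composition estimates on $\rdd$ and for the Banach-algebra property of $\hul^s(\rdd)$.
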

Here $D^2S$ denotes the Hessian matrix of $S$.

\section{Short time analysis of the Hamiltonian flow}
\subsection{Sobolev regularity of the Hamiltonian flow}
In this section we will assume the following hypothesis on the potential $V(t,x)$. \par
Let $\kappa\in\bN$, $\kappa\geq d+1$.
 \par\medskip
{\bf Assumption (B)} {\it
 $V(t,x)$ is a real-valued function in $L^1_{loc}(\R\times\rd)$ such that for almost every $t\in\R$ and $|\alpha|\leq 2$ the derivatives $\partial^\alpha_x V(t,x)$ exist and are continuous with respect to $x$, and moreover}
\[
\partial^\alpha_x V\in L^\infty(\R;\hul^{\kappa}(\rd))\quad \textrm{for}\ |\alpha|=2.
\]
\par\smallskip
In particular, we will take $\kappa=d+1$ in Theorem \ref{mainteo} and higher values of $\kappa$ in Theorem \ref{mainteo2} below. \par
Denote by $(x(t,s,y,\eta),\xi(t,s,y,\eta))$, $s,t\in\R$, $y,\eta\in\rd$, the solution of the Hamiltonian system 
\begin{equation}\label{hams}
\dot{x}=\xi,\quad \dot{\xi}=-\nabla_x V(t,x)
\end{equation}
with initial condition at time $t=s$ given by $x(s,s,y,\eta)=y$, $\xi(s,s,y,\eta)=\eta$.\par Since $\hul^{\kappa}(\rd)\subset L^\infty(\rd)$ (because $\kappa\geq d+1>d/2$), for $|\alpha|=2$ we have $\partial^\alpha_x V\in L^\infty(\R\times\R^d)$, and moreover $\partial^\alpha_x V(t,x)$ is continuous with respect to $x$ for almost every $t$, so that the flow
\begin{equation}\label{chi}
(y,\eta)\mapsto (x(t,s,y,\eta),\xi(t,s,y,\eta))
\end{equation}
defines a $C^1$ canonical transformation $\rd\times\rd\to\rd\times\rd$.   Moreover, for fixed $s$, as a function of $t,y,\eta$, the function $x(t,s,y,\eta)$ is $C^1$ whereas the function $\xi(t,s,y,\eta)$ is locally Lipschitz.\par 
Concerning quantitative information, we observe that it follows at once from Gronwall's inequality that for any $T>0$ there exists a constant $C=C(T)>0$ such that 
\begin{multline}\label{eq1}
\Big\|\frac{\partial x}{\partial{y}}(t,s)\Big\|_{L^\infty(\rd_y\times\rd_\eta)}+ 
\Big\|\frac{\partial x}{\partial{\eta}}(t,s)\Big\|_{L^\infty(\rd_y\times\rd_\eta)}\\
+
\Big\|\frac{\partial \xi}{\partial{y}}(t,s)\Big\|_{L^\infty(\rd_y\times\rd_\eta)}+
\Big\|\frac{\partial \xi}{\partial{\eta}}(t,s)\Big\|_{L^\infty(\rd_y\times\rd_\eta)}\leq C,
\end{multline}
if $|t-s|\leq T$. In particular the map $(x(t,s),\xi(t,s))$ is globally Lipschitz. \par
In fact, we can prove the following result, which is finer when $|t-s|$ is small.
\begin{lemma}
For every $T>0$ there exists a constant $C=C(T)>0$ such that, for $|t-s|\leq T$, 
\begin{equation}\label{nove0}
\Big\|\frac{\partial x}{\partial{y}}(t,s)-{\rm Id}\Big\|_{L^\infty(\rd_y\times\rd_\eta)}\leq C|t-s|^2,\quad 
\Big\|\frac{\partial x}{\partial{\eta}}(t,s)\Big\|_{L^\infty(\rd_y\times\rd_\eta)}\leq C|t-s|
\end{equation}
\begin{equation}\label{nove}
\Big\|\frac{\partial \xi}{\partial{y}}(t,s)\Big\|_{L^\infty(\rd_y\times\rd_\eta)}\leq C|t-s|,\quad
\Big\|\frac{\partial \xi}{\partial{\eta}}(t,s)-{\rm Id}\Big\|_{L^\infty(\rd_y\times\rd_\eta)}\leq C|t-s|^2.
\end{equation}
\end{lemma}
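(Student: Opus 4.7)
The plan is to derive these sharpened estimates by bootstrapping from the uniform bounds already stated in \eqref{eq1}, using the variational (linearized) system satisfied by the derivatives of the flow. First, I would observe that since $\hul^{\kappa}(\rd)\hookrightarrow L^\infty(\rd)$ under Assumption (B) (as $\kappa\geq d+1>d/2$), the Hessian $D^2_x V(t,x)$ is uniformly bounded on $\R\times\rd$; call this bound $M$.

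Differentiating the Hamiltonian system \eqref{hams} in the initial data $(y,\eta)$, the Jacobian blocks satisfy the variational equations
\begin{equation*}
\tfrac{d}{dt}\partial_y x=\partial_y\xi,\quad \tfrac{d}{dt}\partial_y\xi=-D^2_x V(t,x(t,s))\,\partial_y x,
\end{equation*}
\begin{equation*}
\tfrac{d}{dt}\partial_\eta x=\partial_\eta\xi,\quad \tfrac{d}{dt}\partial_\eta\xi=-D^2_x V(t,x(t,s))\,\partial_\eta x,
\end{equation*}
with initial conditions at $t=s$ given by $\partial_y x=\mathrm{Id}$, $\partial_y\xi=0$, $\partial_\eta x=0$, $\partial_\eta\xi=\mathrm{Id}$. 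Integrating from $s$ to $t$ converts these into the Volterra-type identities
\begin{equation*}
\partial_y x(t,s)=\mathrm{Id}+\int_s^t \partial_y\xi(\tau,s)\,d\tau,\qquad \partial_y\xi(t,s)=-\int_s^t D^2_xV(\tau,x(\tau,s))\,\partial_y x(\tau,s)\,d\tau,
\end{equation*}
\begin{equation*}
\partial_\eta x(t,s)=\int_s^t \partial_\eta\xi(\tau,s)\,d\tau,\qquad \partial_\eta\xi(t,s)=\mathrm{Id}-\int_s^t D^2_xV(\tau,x(\tau,s))\,\partial_\eta x(\tau,s)\,d\tau.
\end{equation*}

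Now the bootstrap: insert the crude uniform bounds \eqref{eq1} into the right-hand sides. For the $y$-derivatives, the second identity gives $\|\partial_y\xi(t,s)\|_\infty\leq M\int_s^t\|\partial_y x(\tau,s)\|_\infty d\tau\leq MC|t-s|$, and plugging this back into the first identity yields $\|\partial_y x(t,s)-\mathrm{Id}\|_\infty\leq MC|t-s|^2/2$. Symmetrically, for the $\eta$-derivatives, the third identity combined with \eqref{eq1} gives $\|\partial_\eta x(t,s)\|_\infty\leq C|t-s|$, and feeding this into the fourth identity produces $\|\partial_\eta\xi(t,s)-\mathrm{Id}\|_\infty\leq MC|t-s|^2/2$. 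Absorbing constants, this is exactly \eqref{nove0}--\eqref{nove}.

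There is essentially no obstacle here once the uniform $L^\infty$ bound on $D^2_xV$ is in hand; the argument is a one-step bootstrap on Volterra integral equations, not even requiring Gronwall. The only subtle point is the justification of the variational equations at this low regularity: one uses that $\partial^\alpha_x V$ with $|\alpha|=2$ is continuous in $x$ for a.e.\ $t$ and bounded, so the $C^1$ character of the flow (already asserted before the lemma via classical ODE theory for Carath\'eodory systems) indeed yields a.e.\ differentiability in $t$ of the Jacobian and the integrated identities above hold pointwise in $(y,\eta)$.
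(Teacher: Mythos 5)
Your proof is correct and takes essentially the same approach as the paper: both differentiate the Hamiltonian system to obtain Volterra integral identities for the Jacobian blocks and then bootstrap once from the crude uniform bounds \eqref{eq1}. The only cosmetic difference is that the paper iterates the Volterra formula into a double integral before estimating, whereas you plug the crude bound in directly and then feed the improved bound back — the substance is identical.
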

\begin{proof}
From \eqref{hams} we have
\[
\begin{cases}\displaystyle 
\frac{\partial x}{\partial u}(t,s)=\frac{\partial y}{\partial u}+\int_s^t \frac{\partial \xi}{\partial u}(\tau,s)\, d\tau
\\ \displaystyle
\frac{\partial \xi}{\partial u}(t,s)=\frac{\partial \eta}{\partial u}-\int_s^t \frac{\partial^2}{\partial x\partial x } V(\tau,x(\tau,s)) \frac{\partial x}{\partial u}(\tau,s)\, d\tau,
\end{cases}
\]
where $u$ stands for $y_j$ or $\eta_j$, $j=1,\ldots,d$. Hence we obtain
\begin{equation*}
\frac{\partial x}{\partial u}(t,s)-\frac{\partial y}{\partial u}=\int_s^t \frac{\partial \xi}{\partial u}(\tau,s)\, d\tau
\end{equation*}
and 
\begin{align*}
\frac{\partial \xi}{\partial u}(t,s)-\frac{\partial \eta}{\partial u}+&\int_s^t \frac{\partial^2}{\partial x\partial x } V(\tau,x(\tau,s)) \frac{\partial y}{\partial u}\, d\tau\\
&=-\int_s^t \int_s^\tau \frac{\partial^2}{\partial x\partial x } V(\tau,x(\tau,s)) \frac{\partial \xi}{\partial u}(\sigma,s)\, d\sigma.
\end{align*}
Then, using \eqref{eq1} we obtain at once the desired estimates.
\end{proof} 

Our aim now is to prove that the flow $(x(t,s,y,\eta),\xi(t,s,y,\eta))$ has the same space regularity as $\nabla_x V$, namely that the first space derivatives of $x(t,s,y,\eta),\xi(t,s,y,\eta)$ belong to $\hul^{\kappa}(\rdd)$. In fact, the following precise asymptotic estimates of their Sobolev norm as $|t-s|\to 0$ will play a key role. 
\begin{proposition}\label{pro4}
Let $T>0$. There exists a constant $C>0$ depending only on $T$ and upper bounds for the norm of $D^2_xV$ in $L^\infty(\R;\hul^{\kappa}(\rd))$ such that, for $2\leq |\alpha|+|\beta|\leq \kappa+1$, 
\begin{equation}\label{eq2}
\|\partial^\alpha_y\partial^\beta_\eta x(t,s)\|_{L^2(B)}\leq C|t-s|^{|\beta|+2},\qquad \|\partial^\alpha_y\partial^\beta_\eta \xi(t,s)\|_{L^2(B)}\leq C|t-s|^{|\beta|+1}
\end{equation}
for every open ball $B\subset\rd_y\times\rd_\eta$ of radius $1$.
\end{proposition}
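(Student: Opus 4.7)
\emph{Sketch of proof.} The strategy is to argue by induction on $n=|\alpha|+|\beta|$ from $n=2$ up to $n=\kappa+1$. Differentiating the Hamiltonian system \eqref{hams} $\alpha$ times in $y$ and $\beta$ times in $\eta$, and using the initial condition at $t=s$, yields the integral identities
\begin{align*}
\partial^\alpha_y\partial^\beta_\eta x(t,s) &= \int_s^t \partial^\alpha_y\partial^\beta_\eta \xi(\tau,s)\,d\tau,\\
\partial^\alpha_y\partial^\beta_\eta \xi(t,s) &= -\int_s^t \partial^\alpha_y\partial^\beta_\eta\bigl[\nabla_x V(\tau,x(\tau,s,y,\eta))\bigr]\,d\tau,
\end{align*}
valid whenever $|\alpha|+|\beta|\geq 2$ (so boundary contributions from $y$ and $\eta$ vanish). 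Expanding by the Faà di Bruno formula, the integrand in the second identity becomes a finite linear combination of terms
\[
(\partial^\sigma\nabla_x V)(\tau,x(\tau,s,y,\eta))\prod_{\ell=1}^{|\sigma|}\partial^{\mu_\ell}_y\partial^{\nu_\ell}_\eta x_{k_\ell}(\tau,s,y,\eta),
\]
with $1\leq|\sigma|\leq n$, $\sum_\ell\mu_\ell=\alpha$, $\sum_\ell\nu_\ell=\beta$, and each $|\mu_\ell|+|\nu_\ell|\geq 1$.

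The key step is to isolate the diagonal contribution $|\sigma|=1$, which reads $\sum_k(\partial^2_{x_kx_j}V)(\tau,x(\tau,s))\,\partial^\alpha_y\partial^\beta_\eta x_k(\tau,s)$. Since $D^2V\in L^\infty(\R;\hul^\kappa(\rd))\subset L^\infty(\R\times\rd)$, this contribution to $\|\partial^\alpha_y\partial^\beta_\eta \xi(t,s)\|_{L^2(B)}$ is controlled by $\|D^2V\|_\infty\int_s^t\|\partial^\alpha_y\partial^\beta_\eta x(\tau,s)\|_{L^2(B)}\,d\tau$. Combined with the first integral identity, this produces a Gronwall-type coupling for the pair
\[
F(t):=\|\partial^\alpha_y\partial^\beta_\eta x(t,s)\|_{L^2(B)},\qquad G(t):=\|\partial^\alpha_y\partial^\beta_\eta \xi(t,s)\|_{L^2(B)},
\]
which reduces the proposition to establishing the pointwise source bound $S(\tau)\leq C|\tau-s|^{|\beta|}$, where $S(\tau)$ denotes the sum of the $L^2(B)$-norms of the Faà di Bruno terms with $|\sigma|\geq 2$. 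Indeed, iterating $F(t)\leq\int_s^t G$ and $G(t)\leq C\int_s^t F+\int_s^tS$, and integrating the source bound, gives the desired $F(t)\leq C|t-s|^{|\beta|+2}$ and $G(t)\leq C|t-s|^{|\beta|+1}$.

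To estimate the source, each Faà di Bruno term with $|\sigma|\geq 2$ involves only factors of order $|\mu_\ell|+|\nu_\ell|<n$, which fall under the inductive hypothesis (or, for $|\mu_\ell|+|\nu_\ell|=1$, under the preliminary lemma). We apply Hölder, putting first-order factors in $L^\infty$---each $\partial_{y_i}x$ contributing $O(1)$ and each $\partial_{\eta_i}x$ contributing $O(|t-s|)$ by the lemma---and higher-order factors in appropriate $L^{p_\ell}$ via Gagliardo-Nirenberg-Sobolev interpolation \eqref{gns} between the $L^\infty$ bounds on $\partial_{y_j}x$ from the lemma and the $L^2$ bounds on derivatives up to order $\kappa+1$ from the inductive hypothesis. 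The composition $(\partial^\sigma\nabla_xV)(\tau,x(\tau,s))$ is inserted in $L^{p_0}(B)$ by exploiting that the \emph{full} canonical map $(y,\eta)\mapsto(x(\tau,s,y,\eta),\xi(\tau,s,y,\eta))$ is globally bi-Lipschitz (by \eqref{eq1}) with Jacobian determinant equal to $1$; changing variables reduces the bound to $\|\partial^{\sigma+e_j}V\|_{L^{p_0}_{ul}(\rd)}$, which is controlled by $\|D^2V\|_{L^\infty(\R;\hul^\kappa)}$ via Sobolev embedding (noting $|\sigma+e_j|\leq\kappa+2$). Summing powers of $|\tau-s|$ across all factors gives $|\tau-s|^{|\beta|+2K}$ with $K$ the number of higher-order factors, which dominates $|\beta|$.

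The main technical difficulty lies in the extremal case $n=\kappa+1$ with $|\sigma|=n$ (all single-derivative factors): here $\partial^{\sigma+e_j}V$ is only in $L^2_{ul}$, and we are forced to take $p_0=2$, throwing all remaining factors into $L^\infty$. Fortunately these are exactly the first-order factors of $x$, which \emph{are} $L^\infty$-bounded by the lemma, and the count $\sum|\nu_\ell|=|\beta|$ is precisely what is needed. The bulk of the bookkeeping is to verify that the Hölder exponents can be chosen so that $1/p_0+\sum1/p_\ell\leq 1/2$ for every intermediate case $2\leq|\sigma|<n$, and that the associated Gagliardo-Nirenberg interpolations remain within the admissible range of orders permitted by the inductive hypothesis.
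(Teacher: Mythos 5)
Your overall skeleton (differentiate the integral form of the Hamiltonian flow, separate the linear $|\sigma|=1$ term for a Gronwall/Volterra absorption, expand the remainder by Fa\`a di Bruno, estimate each term by H\"older with the Jacobian-measure-preserving change of variables for the $V$ factor) is the same as the paper's. However, there is a genuine gap in how you supply the intermediate Lebesgue exponents for the source terms, and as stated your induction does not close.

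You propose to run the induction on the $L^2(B)$ bounds of the statement and to obtain the $L^{p_\ell}(B)$ estimates for the lower-order factors $\partial^{\mu_\ell}_y\partial^{\nu_\ell}_\eta x$ by Gagliardo--Nirenberg interpolation ``between the $L^\infty$ bounds on $\partial_{y_j}x$ from the lemma and the $L^2$ bounds on derivatives up to order $\kappa+1$ from the inductive hypothesis.'' But at step $n=|\alpha|+|\beta|$ the inductive hypothesis provides $L^2$ control only for derivatives of $x$ of order $\leq n-1$, not up to $\kappa+1$; in particular, writing $\partial^{\mu_\ell}_y\partial^{\nu_\ell}_\eta x=\partial^{\gamma}(\partial_{y_i}x)$ and invoking \eqref{gns} at the scale $\kappa$ requires $\|\partial_{y_i}x\|_{H^\kappa(B)}$, i.e.\ $L^2$ bounds on derivatives of $x$ of order $\kappa+1$, which is precisely what you are in the middle of proving. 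If you instead throttle \eqref{gns} down to the maximal admissible scale $m=n-2$ (so that the inductive hypothesis actually supplies $\|\partial_{y_i}x\|_{H^{m}(B)}$), the resulting H\"older exponents no longer add up: e.g.\ at the top level $n=\kappa+1$ with $|\sigma|=\kappa$, one factor of order $2$ and $\kappa-1$ factors of order $1$, you get $1/p_1=1/(2(n-2))=1/(2(\kappa-1))$ while the $(\partial^\sigma\nabla_xV)$ factor (which carries $n-2=\kappa-1$ derivatives of $D^2_xV$) needs $1/p_0=(\kappa-1)/(2\kappa)$, and $1/p_0+1/p_1>1/2$. So the inequality fails exactly where it must hold.

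What is missing is the key structural idea of the paper's proof: the inductive hypothesis must itself be \emph{strengthened} to the scale of $L^p(B)$ norms with the order-dependent exponent $\tfrac1p=\tfrac{|\alpha|+|\beta|-1}{2\kappa}$; see \eqref{eq3}--\eqref{eq4}. With this formulation the $L^{p_j}$ bounds on the lower-order factors are available directly from the inductive hypothesis at the \emph{correct} exponents $\tfrac1{p_j}=\tfrac{|\nu_j|+|\mu_j|-1}{2\kappa}$ (with the fixed $\kappa$, not $n-2$), the Gagliardo--Nirenberg interpolation is reserved for the $(\partial^\sigma\nabla_x V)$ factor alone, and the identity $\sum_j\tfrac1{p_j}=\tfrac1p$ closes the H\"older budget at every level. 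In addition you should insert the paper's preliminary regularization step: replace $V$ by mollifications $V_\epsilon$, establish the a priori estimates for smooth potentials, and then pass to the limit in $\cD'(B)$; otherwise the computations with higher-order $y,\eta$-derivatives of $\xi$ are not a priori meaningful at Assumption~(B) regularity.
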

Here we used the notation $D^2_xV$ for the Hessian of $V$ with respect to $x$.
\begin{proof} 
{\it First Step: Regularization.}\par
 First of all we observe that it is sufficient to prove the desired estimates under the additional assumption that $V(t,x)$ is smooth with respect to $x$ for almost every $t\in\R$. The argument uses standard techniques, but we provide a sketch for the benefit of the reader.\par Assume that the result holds for smooth potentials, and consider smooth regularizations 
\[
V_\epsilon(t,\cdot)=V(t,\cdot)\ast \rho_\epsilon,\quad 0<\epsilon\leq 1,
\]
where $\rho_\epsilon(x)=\epsilon^{-d}\rho(\epsilon^{-1}x)$ is a standard mollifier in $\rd$, i.e.\ $\rho\in C^\infty_c (\rd)$, $\rho\geq 0$, $\int_{\rd} \rho(x)=1$.\par
 We have $\| D^2_x V_\epsilon(t,\cdot)\|_{\hul^{\kappa}}\leq C\|D^2_x V(t,\cdot)\|_{\hul^{\kappa}}$ for a constant $C$ independent of $\epsilon$, so that the corresponding solution $(x_\epsilon(t,s),\xi_\epsilon(t,s))$ will satisfy the estimates in \eqref{eq2} with a constant $C$ independent of $\epsilon$.\par 
 On the other hand, we easily verify, in this order, the following facts. \par
To simplify notation, set 
 $X=(x,\xi)$, $\mathbf{b}(t,X)=(\xi,-\nabla_x V(t,x))$, $X_\epsilon=(x_\epsilon,\xi_\epsilon)$, $\mathbf{b}_\epsilon(t,X)=(\xi,-\nabla_x V_\epsilon(t,x))$, $Y=(y,\eta)$. \par\medskip
  
 \begin{itemize}
 \item[a)] {\it $\nabla_x V(\cdot,0)\in L^1_{loc}(\R)$ and there exists a constant $C>0$ such that 
 \[
 |\nabla_x V_\epsilon(t,x)-\nabla_x V(t,0)|\leq C(1+|x|)
 \]
 for almost every $t\in\R$ and every $x\in\rd$, $0<\epsilon\leq 1$.}\par
 Indeed, for almost every $t\in\R$ we have the Taylor expansion 
\begin{equation}\label{taylor}
V(t,x)=a_0(t)+\sum_{j=1}^d a_j(t)x_j+V^{(2)}(t,x),
\end{equation}
where $V^{(2)}\in L^\infty_{loc}(\R\times\rd)$, because $D^2_x V\in L^\infty(\R\times\rd)$ and $D^2_x V(t,x)$ is continuous with respect to $x$ for almost every $t$. Since $V\in L^1_{loc}(\R\times\rd)$, we see that the function $a_0(t)+\sum_{j=1}^d a_j(t)x_j$ is in $L^1_{loc}(\R)$ for almost every $x$, which implies $a_j\in L^1_{loc}(\R)$, $j=0,\ldots,d$. In particular $\nabla_x V(\cdot,0)\in L^1_{loc}(\R)$.\par
Similarly we have 
\[
 \nabla_x V(t,x)-\nabla_x V(t,0)=\tilde{\mathbf{b}}(t,x)
 \]
 where $|\tilde{\mathbf{b}}(t,x)|\leq C|x|$ for almost every $t\in\R$ and every $x\in\rd$. Since 
 \[
  \nabla_x V_\epsilon(t,\cdot)-\nabla_x V(t,0)=\tilde{\mathbf{b}}(t,\cdot)\ast\rho_\epsilon
 \]
 the desired conclusion follows easily. 
 
 
  \item[b)] {\it The solutions $X_\epsilon(t,s,Y)$ for fixed $s\in\R$ are bounded on the compact subsets of $\R\times\rdd$, uniformly with respect to $\epsilon$.}
  \par In fact, since $X_\epsilon(t,s,Y)=(x_\epsilon(t,s,Y),\xi_\epsilon(t,s,Y)) $ is the flow of $\mathbf{b}_\epsilon(t,X)$ we have 
  \begin{multline*}
 \qquad\qquad X_\epsilon(t,s,Y)= Y+\int_s^t (0,-\nabla_xV(t,0))d\tau\\
 +\int_s^t (\xi_\epsilon(\tau,s,Y),-\nabla_x V_\epsilon(\tau,x_\epsilon(\tau,s,Y))+\nabla_x V(t,0))d\tau.
  \end{multline*}
The estimates in the previous point and Gronwall's inequality give the desired conclusion. 

\item[c)] {\it For almost every $t\in\R$, $\mathbf{b}_\epsilon(t,X)$ converges to $\mathbf{b}(t,X)$ as $\epsilon\to 0$, uniformly on the compact subsets of $\rdd$.}

 \item[d)] {\it The difference $\mathbf{b}_\epsilon(t,X)-\mathbf{b}(t,X)$ is essentially bounded on the compact subsets of $\R\times\rdd$, uniformly with respect to $\epsilon$.}\par
 In fact with the notation in the point a) we have  $\tilde{\mathbf{b}}\in L^\infty_{loc}(\R\times\rd)$.
  On the other hand
 \[
  \nabla_x V_\epsilon(t,\cdot)-\nabla_x V(t,\cdot)=\tilde{\mathbf{b}}(t,\cdot)\ast\rho_\epsilon-\tilde{\mathbf{b}}(t,\cdot)
 \]
 and the claim follows at once. 
 
  \item[e)] {\it For fixed $s\in\R$, $X_\epsilon(t,s,Y)$ converges to $X(t,s,Y)$ as $\epsilon\to 0$, uniformly on the compact subsets of $\R\times\rd$.} \par In fact it turns out that for fixed $T,R>0$, by the point b) there exists a ball $B'\subset\rdd$ where the functions $X_\epsilon(t,s,Y)$  take values for $|t|\leq T$, $|Y|\leq R$, and we have\footnote{The $L^\infty$ norm of a vector-valued function $f$ is meant as the $L^\infty$ norm of the Euclidean norm $|f(x)|$.} 
\begin{multline*}
\qquad\qquad|X_\epsilon(t,s,Y)-X(t,s,Y)|\leq\int_s^t ||\mathbf{b}_\epsilon(\tau,\cdot)-\mathbf{b}(\tau,\cdot)\|_{L^\infty(B')}\, d\tau\\
+\int_s^t \|\nabla_X\mathbf{b}(\tau,\cdot)\|_{L^\infty(\rdd)}|X_\epsilon(\tau,s,Y)-X(\tau,s,Y)|\, d\tau.
\end{multline*}
The first integral tends to zero by the dominated convergence theorem (by the points c) and d)) and one concludes by Gronwall's inequality. 

 \end{itemize}
 Hence for every open ball $B\subset \rdd$ the functions $X_\epsilon(t,s,\cdot)=(x_\epsilon(t,s),\xi_\epsilon(t,s))$ converge in $\cD'(B)$ to $X(t,s)=(x(t,s),\xi(t,s))$ as $\epsilon\to 0$, and then the desired estimates \eqref{eq2} hold\footnote{As a consequence of the Riesz representation theorem and the density of $C^\infty_c(B)$ in $L^2(B)$, if $u_n$ is a bounded sequence in $L^2(B)$, say $\|u_n\|_{L^2(B)}\leq C$, converging to a distribution $u$ in $\mathcal{D}'(B)$, it turns out $u\in L^2(B)$ and $\|u\|_{L^2(B)}\leq C$.} for $(x(t,s),\xi(t,s))$.
\par\medskip
{\it Second step: A priori estimates.}\par
We now prove the formulas \eqref{eq2} as priori estimates. In order to apply an inductive argument we have to formulate the inductive hypothesis in the following stronger form: for \[
2\leq |\alpha|+|\beta|\leq \kappa+1,\quad |t-s|\leq T,\quad 1/p=(|\alpha|+|\beta|-1)/(2\kappa),
\] we have 
\begin{equation}\label{eq3}
\|\partial^\alpha_y\partial^\beta_\eta x(t,s)\|_{L^p(B)}\leq C|t-s|^{|\beta|+2},
\end{equation}
\begin{equation}\label{eq4}
 \|\partial^\alpha_y\partial^\beta_\eta \xi(t,s)\|_{L^p(B)}\leq C|t-s|^{|\beta|+1}
\end{equation}
for a constant $C>0$ independent of $B$, as in the statement. Therefore we prove these estimates by induction on $|\alpha|+|\beta|$.\par
We have, for $2\leq|\alpha|+|\beta|\leq \kappa+1$,
\begin{equation}\label{eq4bis}
\partial^\alpha_y\partial^\beta_\eta x(t,s,y,\eta)=\int_s^t \partial^\alpha_y\partial^\beta_\eta \xi(\tau,s,y,\eta)\,d\tau
\end{equation}
and 
\begin{multline}\label{eq4ter}
\partial^\alpha_y\partial^\beta_\eta \xi(t,s,y,\eta)=-\int_s^t \sum_{j=1}^d (\partial_{x_j}\nabla_x V)(\tau,x(\tau,s)) \partial^\alpha_y\partial^\beta_\eta x_j(\tau,s,y,\eta)\,d\tau\\
-\int_s^t b_{\alpha,\beta}(\tau,s,y,\eta)\, d\tau
\end{multline}
where $b_{\alpha,\beta}(\tau,s,y,\eta)$ is a linear combination of terms of the form 
\[
(\partial^\sigma_x\nabla_x V)(\tau,x(\tau,s)) \Big( \partial^{\nu_1}_y\partial^{\mu_1}_\eta x_{j_1}(\tau,s) \Big)\ldots  \Big( \partial^{\nu_{|\sigma|}}_y\partial^{\mu_{|\sigma|}}_\eta x_{j_{|\sigma|}}(\tau,s)\Big)
\]
where $j_1,\ldots,j_{|\sigma|}\in\{1,\ldots,d\}$, $\nu_1+\ldots+\nu_{|\sigma|}=\alpha$, $\mu_1+\ldots+\mu_{|\sigma|}=\beta$, $|\nu_j|+|\mu_j|\geq 1$ for $j=1,\ldots,|\sigma|$ and  $2\leq|\sigma|\leq |\alpha|+|\beta|$.\par
We can estimate the norm in $L^p(B)$ of this expression, with \[
\frac{1}{p}=\frac{|\alpha|+|\beta|-1}{2\kappa},
\] by H\"older's inequality as
\begin{equation}\label{eq5}
\|(\partial^\sigma_x\nabla_x V)(\tau,x(\tau,s)) \|_{L^{p_0}(B)}
\|\partial^{\nu_1}_y\partial^{\mu_1}_\eta x\|_{L^{p_1}(B)}\ldots
\|\partial^{\nu_{|\sigma|}}_y\partial^{\mu_{|\sigma|}}_\eta x\|_{L^{p_{|\sigma|}}(B)},
\end{equation}
where
\[
\frac{1}{p_0}=\frac{|\sigma|-1}{2\kappa},\quad \frac{1}{p_j}=\frac{|\nu_j|+|\mu_j|-1}{2\kappa},\ j=1,\ldots,|\sigma|.
\]
Observe that 
\[
\sum_{j=0}^{|\sigma|}\frac{1}{p_j}=\frac{|\sigma|-1+|\alpha|+|\beta|-|\sigma|}{2\kappa}=\frac{1}{p}.
\]
Since the map $(x(t,s),\xi(t,s))$ is a canonical, therefore measure preserving transformation,  the first factor in \eqref{eq5} can be written as
\begin{align*}
\|(\partial^\sigma_x\nabla_x V)(\tau,x(\tau,s)) \|_{L^{p_0}(B)}&= \|(\partial^\sigma_x\nabla_x V)(\tau,\cdot)\|_{L^{p_0}(\tilde{B})}
\end{align*}
where $\tilde{B}\subset\rd_x\times\rd_\xi$ is the image of $B$ by the flow $(x(t,s),\xi(t,s))$ (we are thinking of $V(\tau,x)$ as a function of $\tau,x,\xi$, constant with respect to $\xi$). 
Now, using \eqref{eq1}, we see that $\tilde{B}$ can be covered by $N=N(T,d)$ boxes $B'\times B''$, with $B',B''\subset\rd$ balls of radius $1$, for $|t-s|\leq T$, so that we can continue the estimate, by the Gagliardo-Nirenberg-Sobolev inequality \eqref{gns} (using $2\leq |\sigma|\leq \kappa+1$) as
\begin{align*}
\|(\partial^\sigma_x\nabla_x V)(\tau,x(\tau,s)) \|_{L^{p_0}(B)}&\leq C\sup_{B''}\|\partial^\sigma_x\nabla_x V(\tau,\cdot) \|_{L^{p_0}(B'')}\\
&\leq C'\sup_{B''}\|D^2_x V\|_{L^\infty(\R\times \R^d)}^{1-(|\sigma|-1)/\kappa}\|D^2_x V\|_{L^\infty(\R;H^\kappa(B''))}^{(|\sigma|-1)/\kappa}\\
&\leq C'\|D^2_x V\|_{L^\infty(\R\times \R^d)}^{1-(|\sigma|-1)/\kappa}\|D^2_x V\|_{L^\infty(\R;\hul^{\kappa})}^{(|\sigma|-1)/\kappa}.
\end{align*}
The last expression is finite, since 
\[
\|D^2_xV\|_{L^\infty(\R\times \R^d)}\leq C \|D^2_xV\|_{L^\infty(\R;\hul^{\kappa})}<+\infty
\] by assumption. \par
To treat the other factors in \eqref{eq5} observe that 
\[
\|\partial^{\nu_j}_y\partial^{\mu_j}_\eta x(\tau,s)\|_{L^{p_j}(B)}\leq C|\tau-s|^{|\mu_j|}
\]
for a constant $C=C(T)$ if $|\tau-s|\leq T$. 
This holds for $|\nu_j|+|\mu_j|=1$ by \eqref{nove0} and if $|\nu_j|+|\mu_j|\geq 2$ by the inductive hypothesis (because $|\nu_j|+|\mu_j|\leq |\alpha|+|\beta|-1$, therefore in the latter case we have $|\alpha|+|\beta|\geq 3$). \par
In conclusion, since $\mu_1+\ldots+\mu_{|\sigma|}=\beta$ we have
\[
\|b_{\alpha,\beta}(\tau,s)\|_{L^p(B)}\leq C|\tau-s|^{|\beta|}
\]
and we can estimate in \eqref{eq4ter}
\[
\|\partial^\alpha_y\partial^\beta_\eta \xi(t,s)\|_{L^p(B)}\leq C\int_s^t \|\partial^\alpha_y\partial^\beta_\eta x(\tau,s)\|_{L^p(B)}\, d\tau+C|t-s|^{|\beta|+1}.
\]
This estimate, together with \eqref{eq4bis} and Gronwall's inequality gives \eqref{eq4} and also \eqref{eq3}.\hskip1cm\ 
\end{proof}

Consider now the map
\[
(y,\zeta)\mapsto (\tilde{x}(t,s,y,\zeta),\tilde{\xi}(t,s,y,\zeta)):=(x(t,s,y,\zeta/(t-s)),(t-s)\xi(t,s,y,\zeta/(t-s)),
\]
with $s,t\in\R$ $s\not=t$, $y,\zeta\in\rd$. Observe that it is a $C^1$ canonical transformation. \par
We have the following result. 
\begin{proposition}\label{pro5} We have, for $j,k=1,\ldots,d$, 
\begin{equation}\label{eq6-0}
\frac{\partial \tilde{x}_j}{\partial \zeta_k}=\delta_{j,k}-(t-s)^2a_{jk}(t,s,y,\zeta),
\end{equation}
\begin{equation}\label{eq6}
\frac{\partial \tilde{\xi}_j}{\partial \zeta_k}=\delta_{j,k}-(t-s)^2b_{jk}(t,s,y,\zeta),
\end{equation}
\begin{equation}\label{eq6-2}
\frac{\partial \tilde{x}_j}{\partial y_k}=\delta_{j,k}-(t-s)^2c_{jk}(t,s,y,\zeta),
\end{equation}
\begin{equation}\label{eq6-3}
\frac{\partial \tilde{\xi}_j}{\partial y_k}=(t-s)^2c'_{jk}(t,s,y,\zeta),
\end{equation}
where $a_{jk}(t,s),b_{jk}(t,s),c_{jk}(t,s),c'_{jk}(t,s)$ are functions in a bounded subset of $\hul^{\kappa}(\rdd)$, if $0<|t-s|\leq T$, for every fixed $T>0$.
\end{proposition}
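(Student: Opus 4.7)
The plan is to derive explicit integral representations from the Hamiltonian system \eqref{hams}, extract the advertised $(t-s)^2$ factor from each expression, and verify the $\hul^{\kappa}$ regularity of the remainders $a_{jk}, b_{jk}, c_{jk}, c'_{jk}$ by combining the Sobolev estimates for the flow in Proposition \ref{pro4}, the composition Proposition \ref{pro1}, and a change of variables to accommodate the rescaling $\eta=\zeta/(t-s)$.

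First I would iterate the integral form of \eqref{hams} once, to express each block of the Jacobian of the unrescaled flow as an ``identity contribution plus explicit integral.'' For instance, Fubini gives
\[
\frac{\partial x_j}{\partial \eta_k}(t,s,y,\eta)=(t-s)\delta_{jk}-\int_s^t(t-\sigma)\sum_l (\partial_{x_l}\partial_{x_j}V)(\sigma,x(\sigma,s,y,\eta))\frac{\partial x_l}{\partial \eta_k}(\sigma,s,y,\eta)\,d\sigma,
\]
and the three other blocks of the Jacobian of $(x(t,s),\xi(t,s))$ admit analogous representations, using \eqref{nove0}--\eqref{nove} to isolate the leading contributions. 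Substituting $\eta=\zeta/(t-s)$ and dividing by the appropriate power of $(t-s)$ yields the four identities \eqref{eq6-0}--\eqref{eq6-3}, with $a_{jk}, b_{jk}, c_{jk}, c'_{jk}$ given as explicit weighted integrals in $\sigma\in[s,t]$ of products of $(\partial^\mu_x V)(\sigma, x(\sigma,s,y,\zeta/(t-s)))$, $|\mu|=2$, with first-order partial derivatives of the original flow, also evaluated at $(y, \zeta/(t-s))$. The pointwise boundedness of these remainders, uniformly in $t-s$, is immediate from \eqref{eq1} and \eqref{nove0}--\eqref{nove}.

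The substantive step is the $\hul^{\kappa}$ bound. For fixed $\sigma$, Proposition \ref{pro4} guarantees that $\partial_y^\alpha \partial_\eta^\beta x(\sigma,s,\cdot)$ and $\partial_y^\alpha \partial_\eta^\beta \xi(\sigma,s,\cdot)$ belong to $L^2(B)$ uniformly for $|\alpha|+|\beta|\leq \kappa+1$, with sharp $|\sigma-s|$-asymptotics. Combined with the global bi-Lipschitz bound \eqref{eq1}, the composition Proposition \ref{pro1} (applied to $D^2_x V$ extended trivially in the $\xi$-variable, and to $g=(x(\sigma,s,\cdot),\xi(\sigma,s,\cdot))$) shows that $(\partial^\mu_x V)(\sigma, x(\sigma,s,\cdot))\in \hul^{\kappa}(\rdd)$ uniformly in $\sigma\in[s-T,s+T]$. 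The only remaining issue is the dilation $\eta=\zeta/(t-s)$: the chain rule multiplies each $\zeta$-derivative by $(t-s)^{-1}$, and a standard covering argument (the ball of radius $1/(t-s)$ in $\eta$ being covered by $O((t-s)^{-d})$ balls of radius $1$) shows that the change of variables from $(y,\eta)$ to $(y,\zeta)$ introduces no further loss in the $L^2_{ul}$-based Sobolev seminorms. These factors of $(t-s)^{-|\beta|}$ are precisely absorbed by the extra $|\sigma-s|^{|\beta|+2}$, respectively $|\sigma-s|^{|\beta|+1}$, in the Sobolev estimates of Proposition \ref{pro4}; after integration in $\sigma\in[s,t]$ and division by the $(t-s)^{3}$, respectively $(t-s)^{2}$, appearing as prefactors in the definitions of the remainders, one obtains $\hul^{\kappa}$ bounds that are uniform in $0<|t-s|\leq T$.

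The main obstacle is keeping the regularity budget tight. With $D^2_x V\in \hul^{\kappa}$ and only $\kappa+1$ total space derivatives of the flow available from Proposition \ref{pro4}, the Leibniz expansion of $\partial^\alpha_y \partial^\beta_\zeta a_{jk}$ (and of its analogues) for $|\alpha|+|\beta|\leq \kappa$ generates terms in which each factor just barely fits the hypotheses of the composition and scaling steps. The embedding $\hul^{\kappa}\hookrightarrow L^\infty$, valid because $\kappa\geq d+1>d/2$, and the ability to invoke the Gagliardo-Nirenberg-Sobolev inequality \eqref{gns} exactly as in the proof of Proposition \ref{pro4}, are what allow the estimate to close. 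The arguments for $b_{jk}, c_{jk}, c'_{jk}$ are essentially identical, differing only in which of the $L^\infty$ bounds \eqref{nove0}--\eqref{nove} supplies the leading $(t-s)$-power.
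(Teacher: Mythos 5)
Your strategy is genuinely different from the paper's, and while the outline is plausible, the critical step is not actually justified by the propositions you cite.

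The paper's proof of Proposition \ref{pro5} is far more direct: it simply \emph{defines} $b_{jk}(t,s,y,\zeta)=-(t-s)^{-2}\big(\tfrac{\partial\tilde{\xi}_j}{\partial\zeta_k}-\delta_{jk}\big)$, and then, using the chain rule, writes
\[
\partial^\alpha_y\partial^\beta_\zeta b_{jk}(t,s,y,\zeta)=-(t-s)^{-2-|\beta|}\Big(\partial^\alpha_y\partial^\beta_\eta\frac{\partial\xi_j}{\partial\eta_k}\Big)(t,s,y,\zeta/(t-s)).
\]
After the change of variables $\eta=\zeta/(t-s)$ and the covering argument, the factor $(t-s)^{-2-|\beta|}$ is exactly matched by the bound $|t-s|^{|\beta|+2}$ from \eqref{eq2}; the $|t-s|^{\pm d/2}$ from the Jacobian and the covering cancel. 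No integral representation, no Leibniz expansion involving $V$, no invocation of Proposition \ref{pro1} at all.

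Your route instead unwinds the Hamilton equations into integral representations and tries to bound those integrands. This runs into a real gap at the step you identify as ``the only remaining issue.'' You apply Proposition \ref{pro1} at the \emph{unrescaled} level, with $g=(x(\sigma,s,\cdot),\xi(\sigma,s,\cdot))$, to conclude $(\partial^\mu_x V)\circ x\in\hul^{\kappa}_{(y,\eta)}$. That gives you only the \emph{isotropic} norm; it does not preserve the anisotropic structure where each $\eta$-derivative gains a factor $|\sigma-s|$. When you then substitute $\eta=\zeta/(t-s)$, each $\zeta$-derivative lands a factor $(t-s)^{-1}$ that now has nothing to cancel it, and the $\hul^\kappa$ norm of $(\partial^\mu_xV)(\sigma, x(\sigma,s,y,\zeta/(t-s)))$ a priori blows up like $(t-s)^{-\kappa}$. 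Proposition \ref{pro4} gives the anisotropic $|\sigma-s|^{|\beta|+\cdot}$ scaling only for the flow components $x,\xi$, not for compositions of $V$ with the flow, so your claim that the dilation factors are ``precisely absorbed by the extra $|\sigma-s|^{|\beta|+2}$'' is not supported by the cited statements. You gesture at a Leibniz expansion to make this close ``just barely,'' but that expansion is exactly the delicate, non-automatic part, and it is precisely what the paper sidesteps.

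The argument can be salvaged, but not in the way you describe: one must apply Proposition \ref{pro1} at the \emph{rescaled} level, with the map $g_\sigma(y,\zeta)=\big(x(\sigma,s,y,\zeta/(t-s)),\zeta\big)$. Its Jacobian is uniformly bi-Lipschitz (block triangular with $\partial x/\partial y=\mathrm{Id}+O((\sigma-s)^2)$) and lies in $\hul^{\kappa}(\rdd)$ uniformly in $\sigma\in[s,t]$ by exactly the $(t-s)^{-|\beta|}\cdot|\sigma-s|^{|\beta|+2}$ cancellation coming from Proposition \ref{pro4} applied \emph{before} composition. Even so, the resulting proof is longer than the paper's; you should at least be aware that the chain-rule identity above disposes of the whole matter in a few lines without ever touching $V$ or Proposition \ref{pro1}.
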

\begin{proof}
Let us prove formula \eqref{eq6}; the other formulas can be deduced similarly. First of all we observe that \eqref{eq6} defines the functions $b_{jk}(t,s)$, since $s\not=t$. Now, if $B\subset\rd_y\times\rd_\zeta$ is any open ball of radius $1$ we have clearly by the second formula in \eqref{nove} that, with $\eta=\zeta/(t-s)$,
\begin{align*}
\|b_{jk}(t,s)\|_{L^2(B)}&=(t-s)^{-2}\|\frac{\partial \xi_j}{\partial \eta_k}(t,s,y,\zeta/(t-s))-\delta_{jk}\|_{L^2(B)}\\
&\leq C(t-s)^{-2}\|\frac{\partial \xi_j}{\partial \eta_k}(t,s,y,\zeta/(t-s))-\delta_{jk}\|_{L^\infty(B)}\leq C'
\end{align*}
for $0<|t-s|\leq T$ and some $C'=C'(T)>0$ independent of $B$.\par
We now estimate the derivatives of $b_{jk}$. For $1\leq |\alpha|+|\beta| \leq \kappa$ we have
\[
\partial^\alpha_y\partial^\beta_\zeta b_{jk}(t,s,y,\zeta)=-(t-s)^{-2-|\beta|}\Big(\partial^\alpha_y\partial^\beta_\eta \frac{\partial \xi_j}{\partial \eta_k}\Big)(t,s,y,\zeta/(t-s))
\] 
and therefore
\[
\|\partial^\alpha_y\partial^\beta_\zeta b_{jk}(t,s)\|_{L^2(B)}\leq |t-s|^{-2-|\beta|+d/2}\| \partial^\alpha_y\partial^\beta_\eta \frac{\partial \xi_j}{\partial \eta_k}(t,s)\|_{L^2(\tilde{B})},
\]
where 
\[
\tilde{B}=\{(y,\eta)\in\rd\times\rd:\, (y,(t-s)\eta)\in B\}.
\] Observe that $\tilde{B}$ can be covered by $N=O(|t-s|^{-d})$ balls $B'\subset\rd_y\times\rd_\eta$ of radius $1$, and therefore 
\[
\| \partial^\alpha_y\partial^\beta_\eta \frac{\partial \xi_j}{\partial \eta_k}(t,s)\|_{L^2(\tilde{B})}\leq C|t-s|^{-d/2}\sup_{B'}\| \partial^\alpha_y\partial^\beta_\eta \frac{\partial \xi_j}{\partial \eta_k}(t,s)\|_{L^2(B')}\leq C' |t-s|^{-d/2+|\beta|+2},
\]
where in the last inequality we used the second formula in \eqref{eq2}.\par
Summing up we have 
\[
\|\partial^\alpha_y\partial^\beta_\zeta b_{jk}(t,s)\|_{L^2(B)}\leq C
\]
for some $C=C(T)>0$ independent of $B$, for $0<|t-s|\leq T$.
\end{proof}

It follows from the proof, or more simply from the inclusion $\hul^{\kappa}(\rdd)\subset L^\infty(\rdd)$ (recall $\kappa\geq d+1$), that the functions $a_{jk}(t,s),b_{jk}(t,s),c_{jk}(t,s),c'_{jk}(t,s)$ belong to a bounded subset of $L^\infty(\rdd)$ for $0<|t-s|\leq T$.\par
In particular, we have obtained the following regularity result. 
\begin{proposition}\label{pro6}
For any fixed $T>0$, the components of the canonical transformation $(\tilde{x}(t,s,y,\zeta),\tilde{\xi}(t,s,y,\zeta))$, have first space derivatives in a  bounded subset of $\hul^{\kappa}(\rdd)$, when $0<|t-s|\leq T$.
\end{proposition}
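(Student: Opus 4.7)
The plan is very short: Proposition \ref{pro6} follows almost immediately from Proposition \ref{pro5}, so the work has essentially already been done. I would simply unpack the four formulas \eqref{eq6-0}--\eqref{eq6-3} and verify that each right-hand side lies in a bounded subset of $\hul^{\kappa}(\rdd)$, uniformly for $0<|t-s|\leq T$.

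First I would note that the constant functions $\delta_{j,k}$ are trivially in $\hul^{\kappa}(\rdd)$, since their $H^\kappa$-norm over any unit ball is just a fixed constant depending only on the volume of the ball. Next, Proposition \ref{pro5} tells us that each of $a_{jk}(t,s,\cdot,\cdot)$, $b_{jk}(t,s,\cdot,\cdot)$, $c_{jk}(t,s,\cdot,\cdot)$ and $c'_{jk}(t,s,\cdot,\cdot)$ belongs to a bounded subset of $\hul^{\kappa}(\rdd)$ when $0<|t-s|\leq T$. Multiplying by the scalar factor $(t-s)^2$, whose absolute value is bounded by $T^2$, keeps us in a bounded subset of the same space. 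Adding a bounded constant function then gives the claim for each of the first-order partial derivatives of the components of $(\tilde{x}(t,s),\tilde{\xi}(t,s))$.

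Therefore, for every $j,k\in\{1,\ldots,d\}$ the functions $\partial\tilde{x}_j/\partial y_k$, $\partial\tilde{x}_j/\partial \zeta_k$, $\partial\tilde{\xi}_j/\partial y_k$, $\partial\tilde{\xi}_j/\partial \zeta_k$ lie in a common bounded subset of $\hul^{\kappa}(\rdd)$ for $0<|t-s|\leq T$, which is exactly the statement of Proposition \ref{pro6}. The main obstacle — the short-time Sobolev analysis of the unrescaled Hamiltonian flow — has already been overcome in Propositions \ref{pro4} and \ref{pro5}; here only a bookkeeping step is required, and in particular there is no additional inductive or approximation argument to carry out.
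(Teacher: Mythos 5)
Your proof is correct and follows essentially the same route the paper (implicitly) takes: Proposition~\ref{pro6} is an immediate corollary of the formulas \eqref{eq6-0}--\eqref{eq6-3} and the uniform $\hul^{\kappa}$-bounds on $a_{jk}, b_{jk}, c_{jk}, c'_{jk}$ established in Proposition~\ref{pro5}, together with the observations that constants lie in $\hul^{\kappa}$ and that scalar multiplication by $(t-s)^2 \leq T^2$ and addition preserve boundedness. The bookkeeping you spell out is exactly what the paper leaves implicit.
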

The now study the regularity of the inverse function of $\zeta\mapsto \tilde{x}(t,s,y,\zeta)$, for $|t-s|$ small.
\begin{proposition}\label{pro5bis}
There exists $\delta>0$ such that for $0<|t-s|\leq\delta$ and $y\in\rd$ the map 
\[
\zeta\mapsto \tilde{x}(t,s,y,\zeta)=x(t,s,y,\zeta/(t-s))
\]
 is invertible $\rd\to\rd$ and its inverse $\zeta=\zeta(t,s,\tilde{x},y)$ has first derivatives with respect to $\tilde{x}$ and $y$ in a bounded subset of $\hul^{\kappa}(\rdd)$.\par
  More precisely, for $j,k=1,\ldots,d$, we have 
\begin{equation}\label{eq7}
\frac{\partial\zeta_j}{\partial \tilde{x}_k}(t,s,\tilde{x},y)=\delta_{jk}-(t-s)^2 d_{jk}(t,s,\tilde{x},y)
\end{equation}
and 
\begin{equation}\label{eq8}
\frac{\partial\zeta_j}{\partial y_k}(t,s,\tilde{x},y)=-\delta_{jk}-(t-s)^2 d'_{jk}(t,s,\tilde{x},y)
\end{equation}
where $d_{jk}(t,s,\tilde{x},y)$ and $d'_{jk}(t,s,\tilde{x},y)$ belong to a bounded subset of $\hul^{\kappa}(\rd_{\tilde{x}}\times\rd_y)$ for $0<|t-s|\leq\delta$.
\end{proposition}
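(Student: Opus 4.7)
The plan is to regard the scalar map $\zeta\mapsto\tilde x(t,s,y,\zeta)$ as the second block of the $y$-extended map $G:\rdd\to\rdd$, $G(y,\zeta)=(y,\tilde x(t,s,y,\zeta))$, and to extract \eqref{eq7}--\eqref{eq8} by combining a block-matrix inversion with the composition and inverse-mapping results of Section~2 (Propositions~\ref{pro1} and~\ref{pro2}) together with the Banach algebra property of $\hul^{\kappa}$.

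The structural formulas \eqref{eq6-0} and \eqref{eq6-2} give
\[
DG=\begin{pmatrix} I_d & 0 \\ I_d-(t-s)^2 C & I_d-(t-s)^2 A\end{pmatrix},
\]
with $A=(a_{jk})$ and $C=(c_{jk})$ uniformly bounded in $L^\infty(\rdd)$. For $\delta>0$ with $\delta^2\|A\|_{L^\infty}<1/2$, the map $\zeta\mapsto\tilde x(t,s,y,\zeta)-\zeta$ is uniformly Lipschitz in $\zeta$ with constant $<1/2$, uniformly in $y$, so Banach's contraction principle solves $\tilde x(t,s,y,\zeta)=\tilde x$ uniquely in $\zeta$ for each $(y,\tilde x)$. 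Thus $G$ is a bijection of $\rdd$; since $\det DG=\det(I_d-(t-s)^2A)$ is bounded away from zero and $DG$ is bounded, $G$ is globally bi-Lipschitz, uniformly in $0<|t-s|\le\delta$. By Proposition~\ref{pro6}, $DG$ lies in a bounded subset of $\hul^{\kappa}(\rdd;\R^{2d\times 2d})$, hence by the uniform version of Proposition~\ref{pro2} so does $D(G^{-1})$.

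Write $G^{-1}(y,\tilde x)=(y,\zeta(t,s,\tilde x,y))$. The block-inverse formula for lower-triangular block matrices gives
\[
\partial_{\tilde x}\zeta=(\partial_\zeta\tilde x)^{-1}\bigm|_{\zeta=\zeta(t,s,\tilde x,y)},\qquad \partial_y\zeta=-(\partial_\zeta\tilde x)^{-1}\partial_y\tilde x\bigm|_{\zeta=\zeta(t,s,\tilde x,y)}.
\]
Since $\hul^{\kappa}$ is a unital Banach algebra for $\kappa\ge d+1>d/2$, and $\|(t-s)^2A\|_{\hul^{\kappa}}<1$ for $\delta$ small, the Neumann identity
\[
(I_d-(t-s)^2A)^{-1}=I_d+(t-s)^2\,A(I_d-(t-s)^2A)^{-1}
\]
makes sense in $\hul^{\kappa}(\rdd)$ as a function of $(y,\zeta)$; a parallel manipulation of $-(I_d-(t-s)^2A)^{-1}(I_d-(t-s)^2C)$ isolates an $-I_d$ plus an $(t-s)^2$-small remainder. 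Composing these expressions with $G^{-1}$ via Proposition~\ref{pro1} (whose hypotheses are exactly the conclusions of the previous paragraph) produces \eqref{eq7} and \eqref{eq8}, with $d_{jk}$ and $d'_{jk}$ uniformly bounded in $\hul^{\kappa}(\rdd)$ for $0<|t-s|\le\delta$.

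The delicate point is this last step: the Neumann expansion is naturally a function of $(y,\zeta)$, but \eqref{eq7}--\eqref{eq8} are statements in the variables $(y,\tilde x)$, and transferring Kato-Sobolev regularity across the change of variables is possible only because $G^{-1}$ is simultaneously globally bi-Lipschitz \emph{and} has Jacobian in $\hul^{\kappa}$. It is precisely this interplay between Propositions~\ref{pro1} and~\ref{pro2} in the low-regularity scale $\hul^{\kappa}$, rather than the classical $C^k$ scale, together with the Banach algebra structure of $\hul^{\kappa}$, that makes the argument close.
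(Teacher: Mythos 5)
Your argument is correct and follows essentially the same route as the paper: reduce to the $y$-extended bi-Lipschitz map, invoke Proposition~\ref{pro2} for the inverse Jacobian, write $\partial_{\tilde x}\zeta$ and $\partial_y\zeta$ via the block formula, and transfer across the change of variables with Proposition~\ref{pro1} and the Banach algebra structure of $\hul^\kappa$. The only (inessential) differences are that you establish global invertibility by the contraction principle instead of Hadamard's global inversion theorem, and you isolate the $(t-s)^2$-remainder by a Neumann series on the matrix $(I_d-(t-s)^2A)^{-1}$ rather than through the paper's determinant identity $(\det\partial_\zeta\tilde x)^{-1}=1+(t-s)^2a'$ with $a'=a/(1-(t-s)^2a)$; both are equivalent exploits of the same algebra structure.
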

\begin{proof}
Since $\hul^\kappa(\rdd)$ is a Banach algebra ($\kappa\geq d+1$), by \eqref{eq6-0} we have
\begin{equation}\label{eq8bisbis}
{\rm det}\, \frac{\partial \tilde{x}}{\partial\zeta}(t,s,y,\zeta)=1-(t-s)^2 a(t,s,y,\zeta)
\end{equation}
for some function $a(t,s,y,\zeta)$ in a bounded subset of $\hul^\kappa(\rdd)\subset L^\infty(\rdd)$, for $0<|t-s|\leq T$, for any $T>0$. Choose $\delta>0$ such that 
\begin{equation}\label{eq8ter}
(t-s)^2 \|a(t,s,\cdot,\cdot)\|_{L^\infty}\leq 1/2
\end{equation}
for $0<|t-s|\leq \delta$.

The invertibility of the $C^1$ map $\zeta\mapsto \tilde{x}(t,s,y,\zeta)$, for $0<|t-s|\leq \delta$, then follows from Hadamard's global inversion theorem \cite[Theorem 2, page 93]{choquet}. Moreover, for $0<|t-s|\leq \delta$ the map $(y,\zeta)\mapsto (\tilde{x}(t,s,y,\zeta),y)$ will be globally bi-Lipschitz, uniformly with respect to $t,s$, with first derivatives in a bounded subset of $\hul^\kappa(\rdd)$ (by \eqref{eq6-0}). By Proposition \ref{pro2} we see that the same holds for the inverse map $(\tilde{x},y)\mapsto (y,\zeta(t,s,\tilde{x},y))$.\par
In order to prove \eqref{eq7} and \eqref{eq8} we use the formulas
\[
\frac{\partial\zeta}{\partial \tilde{x}}(t,s,\tilde{x},y)=\Big[\frac{\partial \tilde{x}}{\partial\zeta}(t,s,y,\zeta)\Big]^{-1},\quad {\rm with}\ \zeta=\zeta(t,s,\tilde{x},y)
\]
and 
\[
\frac{\partial\zeta}{\partial y}(t,s,\tilde{x},y)=-\Big[\frac{\partial \tilde{x}}{\partial\zeta}(t,s,y,\zeta)\Big]^{-1} \frac{\partial \tilde{x}}{\partial y}(t,s,y,\zeta) ,\quad {\rm with}\ \zeta=\zeta(t,s,\tilde{x},y).
\]
Now we claim that it is sufficient to prove for the inverse matrix 
the formula
\begin{equation}\label{eq9}
\Big[\frac{\partial \tilde{x}}{\partial\zeta}(t,s,y,\zeta)\Big]^{-1}=[\delta_{jk}-(t-s)^2 d''_{jk}(t,s,y,\zeta)]_{j,k=1,\ldots,d}
\end{equation}
with $d''_{jk}(t,s,y,\zeta)$ belonging to a bounded subset of $\hul^\kappa(\rdd)$. In fact, since $\hul^\kappa(\rdd)$ is a Banach algebra, by \eqref{eq9}, \eqref{eq6-2} and the composition property in Proposition \ref{pro1} (with the map $(\tilde{x},y)\mapsto (y,\zeta(t,s,\tilde{x},y))$ playing the role of the map $g$) we see that \eqref{eq7} and \eqref{eq8} follow.\par
To prove \eqref{eq9} we observe that, by \eqref{eq8bisbis} and \eqref{eq8ter} we have 
\[
\Big({\rm det}\, \frac{\partial \tilde{x}}{\partial\zeta}(t,s,y,\zeta)\Big)^{-1}=1+(t-s)^2 a'(t,s,y,\zeta),
\]
where 
\[
a'(t,s,y,\zeta)=\frac{a(t,s,y,\zeta)}{1-(t-s)^2a(t,s,y,\zeta)}
\]
is easily seen to belong to a bounded subset of $\hul^\kappa(\rdd)$ for $0<|t-s|\leq\delta$ (one can use the chain rule and arguments similar to those in the proof of Proposition \ref{pro1}, using $\kappa\geq d+1$). This together with \eqref{eq6-0} gives \eqref{eq9} and concludes the proof.\ 

\end{proof}

\subsection{Sobolev regularity of the classical action} We close this section with an analysis of the regularity of the action $S(t,s,x,y)$ defined in \eqref{azione} for, say, $0<|t-s|\leq \delta$ where $\delta>0$ is the constant appearing in Proposition \ref{pro5bis}.\par As already observed, under Assumption (B) at the beginning of this section, for $|\alpha|=2$ we have $\partial^\alpha_x V\in L^\infty(\R\times\rd)$ and $\partial^\alpha_x V(t,x)$ is continuous with respect to $x$ for almost every $t\in\R$, which implies by the arguments in \cite[Section 2]{fujiwara1} that the for every fixed $s$, the function $S(t,s,x,y)$ is almost everywhere totally differentiable with respect to $(t,x,y)$ and satisfies the Hamilton-Jacobi equation 
\[
\partial_t S(t,s,x,y)+\frac{1}{2}|\nabla_x S(t,s,x,y)|^2+V(t,x)=0.
\]
Moreover $S(t,s,x,y)$ is of class $C^2$ with respect to $x,y$ and satisfies the equations
\begin{equation}\label{eq10}
\frac{\partial S}{\partial x_j}(t,s,x,y)=\xi_j(t,s,y,\eta(t,s,x,y))
\end{equation}
and
\begin{equation}\label{eq11}
\frac{\partial S}{\partial y_j}(t,s,x,y)=-\eta_j(t,s,x,y)
\end{equation}
for $j=1,\ldots,d$.
\begin{proposition}\label{pro7}
For $0<|t-s|\leq \delta$ we have 
\begin{equation}\label{eq12}
S(t,s,x,y)=\frac{1}{2}\frac{|x-y|^2}{t-s}+(t-s)\omega(t,s,x,y),
\end{equation}
where the functions $\partial^\alpha_x\partial^\beta_y\omega(t,s,x,y)$, for $|\alpha|+|\beta|=2$, belong to a bounded subset of $\hul^\kappa(\rdd)$. 
\end{proposition}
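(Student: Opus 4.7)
The strategy is to compute the Hessian of $S$ explicitly via the Hamilton--Jacobi relations \eqref{eq10}--\eqref{eq11} and to compare it term by term with that of the free-particle action $S_0(t,s,x,y):=\frac12|x-y|^2/(t-s)$. Setting $\omega(t,s,x,y):=(t-s)^{-1}(S(t,s,x,y)-S_0(t,s,x,y))$, the claim reduces to showing that the nine second derivatives $\partial^\alpha_x\partial^\beta_y\omega$, $|\alpha|+|\beta|=2$, lie in a bounded subset of $\hul^\kappa(\rdd)$ uniformly for $0<|t-s|\leq\delta$.

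The key dictionary is given by the rescaled flow from Section 3: $\eta(t,s,x,y)=\zeta(t,s,x,y)/(t-s)$, where $\zeta$ is the inverse function furnished by Proposition \ref{pro5bis}, and $\xi(t,s,y,\eta)=\tilde{\xi}(t,s,y,\zeta)/(t-s)$ with $\tilde{\xi}$ as in Proposition \ref{pro5}. The two easy derivatives come from \eqref{eq11}: differentiating $\partial_{y_j}S=-\eta_j=-\zeta_j/(t-s)$ in $y_k$ and in $x_k$ and inserting \eqref{eq7}--\eqref{eq8}, one obtains
\[
\tfrac{\partial^2 S}{\partial y_k\partial y_j}=\tfrac{\delta_{jk}}{t-s}+(t-s)d'_{jk}(t,s,x,y),\qquad \tfrac{\partial^2 S}{\partial x_k\partial y_j}=-\tfrac{\delta_{jk}}{t-s}+(t-s)d_{jk}(t,s,x,y),
\]
which matches $\partial^2_{y_k y_j}S_0$ and $\partial^2_{x_k y_j}S_0$ up to $(t-s)$ times functions already known by Proposition \ref{pro5bis} to belong to a bounded subset of $\hul^\kappa(\rdd)$.

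The more delicate identity is the purely spatial one: differentiating \eqref{eq10} with the chain rule yields
\[
\tfrac{\partial^2 S}{\partial x_k\partial x_j}=\sum_l \tfrac{\partial\xi_j}{\partial\eta_l}(t,s,y,\eta(t,s,x,y))\cdot\tfrac{\partial\eta_l}{\partial x_k}(t,s,x,y).
\]
Since $\tilde{\xi}=(t-s)\xi$ and $\zeta=(t-s)\eta$, the rescaling factors cancel and $\partial\xi_j/\partial\eta_l=\partial\tilde{\xi}_j/\partial\zeta_l=\delta_{jl}-(t-s)^2 b_{jl}(t,s,y,\zeta)$ by Proposition \ref{pro5}. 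Combining with \eqref{eq7}, multiplying out and retaining the $\delta_{jk}/(t-s)$ piece that cancels $\partial^2_{x_k x_j}S_0$, one ends up with
\[
\tfrac{\partial^2 S}{\partial x_k\partial x_j}=\tfrac{\delta_{jk}}{t-s}-(t-s)\bigl[b_{jk}+d_{jk}-(t-s)^2\textstyle\sum_l b_{jl}d_{lk}\bigr],
\]
where the $b$'s are evaluated at $(y,\zeta(t,s,x,y))$ and the $d$'s at $(x,y)$.

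The main technical point is to verify that every such combination remains in a bounded subset of $\hul^\kappa(\rdd)$ when viewed as a function of $(x,y)$. For this one invokes two ingredients: first, the composition Proposition \ref{pro1} applied with the globally bi-Lipschitz map $(x,y)\mapsto(y,\zeta(t,s,x,y))$, whose Jacobian lies in a bounded subset of $\hul^\kappa$ by Proposition \ref{pro5bis}; second, the Banach algebra property of $\hul^\kappa(\rdd)$, which is available because $\kappa\geq d+1>d/2$ (see Section 2.2) and which handles the products $b_{jl}d_{lk}$. The harmless $(t-s)^2$ factor keeps the correction small. Collecting the three blocks of the Hessian and dividing by $t-s$ produces the desired $\omega$ and concludes the proof. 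The step I expect to be the most error-prone is the bookkeeping in the $(x,x)$-block, in particular keeping track of the arguments at which each function from Propositions \ref{pro5} and \ref{pro5bis} is evaluated before invoking the composition lemma.
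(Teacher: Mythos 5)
Your proof is correct and follows essentially the same route as the paper's: both differentiate the Hamilton--Jacobi identities \eqref{eq10}--\eqref{eq11}, rewrite $\eta$ and $\xi$ in terms of $\zeta$ and $\tilde\xi$ to expose the $\delta_{jk}/(t-s)$ singular part, and then invoke Propositions \ref{pro5}, \ref{pro5bis}, \ref{pro1} and the Banach-algebra property of $\hul^\kappa(\rdd)$ to place the remainders in a bounded subset of $\hul^\kappa(\rdd)$. Your computed Hessian blocks, including the $(x,x)$ one $\partial^2_{x_jx_k}\omega=-b_{jk}-d_{jk}+(t-s)^2\sum_l b_{jl}d_{lk}$, match the paper's formulas exactly.
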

\begin{proof}
Consider for example the derivatives of $S$ with respect to $x$. Using the formula 
\[
\omega(t,s,x,y)=(t-s)^{-2}\Big[(t-s)S(t,s,x,y)-\frac{1}{2}|x-y|^2\Big]
\]
together with \eqref{eq10}, \eqref{eq11}, \eqref{eq6} and \eqref{eq7} we deduce easily (cf. \cite[Formula (2.12)]{fujiwara1}) that
\begin{multline*}
\frac{\partial^2}{\partial x_j\partial x_k}\omega(t,s,x,y)=-b_{jk}(t,s,y,\zeta(t,s,x,y))-d_{jk}(t,s,x,y)\\
+(t-s)^2\sum_{m=1}^d b_{jm}(t,s,y,\zeta(t,s,x,y))d_{mk}(t,s,x,y).
\end{multline*}
where the functions $b_{jk}$, $d_{jk}$ are defined in \eqref{eq6} and \eqref{eq7}.  This expression belongs to a bounded subset of $\hul^\kappa(\rdd)$, for $0<|t-s|\leq \delta$, by Propositions \ref{pro5}, \ref{pro5bis} and Proposition \ref{pro1} (with the map $(x,y)\mapsto (y,\zeta(t,s,x,y))$ playing the role of the map $g$). \par
Similarly one can treat the other second derivatives of $S$, using the formulas
\[
\frac{\partial^2}{\partial x_j\partial y_k}\omega(t,s,x,y)=(t-s)^{-2}\Big[-\frac{\partial \zeta_k}{\partial x_j}(t,s,x,y)+\delta_{jk}\Big]=d_{jk}(t,s,x,y)
\]
and
\[
\frac{\partial^2}{\partial y_j\partial y_k}\omega(t,s,x,y)=(t-s)^{-2}\Big[-\frac{\partial \zeta_j}{\partial y_k}(t,s,x,y)-\delta_{jk}\Big]=d'_{jk}(t,s,x,y)
\]
where the functions $d'_{jk}(t,s,x,y)$ are defined in \eqref{eq8}. 
\end{proof}

\section{Analysis of the parametrices and proof of Theorem \ref{mainteo}}
In this section we suppose that $V(t,s)$ satisfies the hypothesis in Assumption (A) in Introduction, which corresponds to Assumption (B) in the previous section with $\kappa=d+1$. In particular all the machinery of the previous section applies with this value of $\kappa$. \par
First of all we see that, by Proposition \ref{pro7}, the operator $E^{(0)}(t,s)$ defined in \eqref{ezero} satisfies the assumptions of Proposition \ref{pro3}, with $s=d+1$, $\lambda=\hbar^{-1}/|t-s|$, for $0<|t-s|\leq \delta$, possibly for a smaller value of $\delta$. Hence it extends to a bounded operator in $L^2(\rd)$ and verifies
\begin{equation}\label{eq20}
\|E^{(0)}(t,s)\|_{L^2\to L^2}\leq C
\end{equation}
for $0<|t-s|\leq\delta$.\par
We now show that $E^{(0)}(t,s)$ converges strongly to the identity operator as $t\to s$.  
\begin{proposition}\label{pro10}
For every $f\in L^2(\rd)$ we have 
\[
\lim_{t\to s} E^{(0)}(t,s)f=f
\]
in $L^2(\rd)$. 
\end{proposition}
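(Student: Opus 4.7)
The plan is to combine the uniform $L^2$-boundedness \eqref{eq20} with a density argument. Since $\|E^{(0)}(t,s)\|_{L^2\to L^2}\leq C$ for $0<|t-s|\leq \delta$ and $C^\infty_c(\rd)$ is dense in $L^2(\rd)$, a standard $\varepsilon/3$-argument reduces the claim to proving $\|E^{(0)}(t,s)f-f\|_{L^2}\to 0$ for every $f\in C^\infty_c(\rd)$.

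For such a test function $f$, I would decompose
\[
E^{(0)}(t,s)f - f \;=\; \big(E^{(0)}(t,s)-U_0(t-s)\big)f \;+\; \big(U_0(t-s)f - f\big),
\]
where $U_0(\tau)=e^{i\tau\hbar\Delta/2}$ is the free Schr\"odinger propagator. The second summand vanishes in $L^2$ as $t\to s$ by the classical strong continuity of the unitary group $\{U_0(\tau)\}_{\tau\in\R}$ (via Plancherel and dominated convergence on the Fourier side). So the task reduces to showing $\|(E^{(0)}(t,s)-U_0(t-s))f\|_{L^2}\to 0$.

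Here I would invoke the factorization from Proposition \ref{pro7}, $S(t,s,x,y)=\tfrac{1}{2}|x-y|^2/(t-s)+(t-s)\omega(t,s,x,y)$, which shows that $E^{(0)}(t,s)-U_0(t-s)$ is the integral operator with kernel
\[
\frac{1}{(2\pi i(t-s)\hbar)^{d/2}}\,e^{i|x-y|^2/(2(t-s)\hbar)}\,\big(e^{i(t-s)\omega(t,s,x,y)/\hbar}-1\big).
\]
Fix $R_0>0$ and split the $y$-integration into the near-diagonal region $|x-y|\leq R_0$ and the far region $|x-y|>R_0$. In the near region, restricting to $y$ in the (compact) support of $f$ and using that $\omega$ has second space derivatives in a bounded subset of $\hul^{d+1}(\rdd)\hookrightarrow L^\infty(\rdd)$ (Proposition \ref{pro7}), hence is locally bounded, one gets the pointwise bound $|e^{i(t-s)\omega/\hbar}-1|\leq C_f\,(t-s)/\hbar$ on the relevant set; a Schur-type estimate then controls the $L^2$-norm of this piece by $O((t-s)/\hbar)\to 0$. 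In the far region, I would integrate by parts in $y$, exploiting that $\nabla_y\bigl(|x-y|^2/(2(t-s)\hbar)\bigr)$ has magnitude $|x-y|/((t-s)\hbar)$, to produce arbitrarily many extra factors $(t-s)\hbar/|x-y|^2$; combined with the smoothness of $f$ and the $\hul^{d+1}$-control on $\omega$ via the chain rule in Proposition \ref{pro1}, this yields integrable, uniformly small tails in $L^2$.

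The step I expect to be the main obstacle is precisely this $L^2$-estimate of the far region: only the second derivatives of $\omega$ are globally bounded, so $\omega$ may grow quadratically at infinity, and a naive $L^\infty$ bound on the amplitude is hopeless. The remedy is the near/far splitting: on compact sets $\omega$ is tame and the smallness of $(t-s)/\hbar$ suffices, while at infinity it is the rapid oscillation of the Gaussian phase, harvested by iterated integration by parts, that compensates for the lack of uniform bounds on $\omega$.
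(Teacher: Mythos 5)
The step that fails is precisely the near\hbox{-}diagonal ``Schur-type estimate.''  After you localize to $|x-y|\le R_0$ and $y\in\supp f$, the kernel of $E^{(0)}(t,s)-U_0(t-s)$ still carries the Fresnel prefactor $(2\pi i(t-s)\hbar)^{-d/2}$, whose absolute value is $(2\pi(t-s)\hbar)^{-d/2}$. Your pointwise bound $|e^{i(t-s)\omega/\hbar}-1|\le C_f(t-s)/\hbar$ only supplies \emph{one} factor of $(t-s)$, so a Schur test with absolute values gives an operator norm of order $(t-s)^{1-d/2}\hbar^{-1-d/2}$, which \emph{diverges} as $t\to s$ for every $d\ge 3$ (and $\hbar$ is fixed here). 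The point is that the Fresnel oscillation $e^{i|x-y|^2/(2(t-s)\hbar)}$ is essential for the boundedness of the parametrix and cannot be discarded by taking absolute values; any argument along your lines would have to run through an $L^2$ bound for the oscillatory integral operator (e.g.\ Proposition \ref{pro3}) with the cut-off amplitude $\chi(x,y)\bigl(e^{i(t-s)\omega/\hbar}-1\bigr)$, controlling its $\hul^{d+1}$ norm, not its $L^\infty$ norm. The far-region integration by parts has the further difficulty you half acknowledged: $\omega$ is only $C^2$ in $(x,y)$ with second derivatives in $\hul^{d+1}$, so iterated integration by parts quickly exhausts the available regularity, and $\nabla\omega$ grows linearly at infinity, so the ``extra factors'' you produce are not pure powers of $(t-s)\hbar/|x-y|^2$.

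The paper's proof takes an entirely different and softer route that sidesteps these issues: it mollifies $V$ to smooth potentials $V_\epsilon$, notes that the resulting $S_\epsilon$ and $E^{(0)}_\epsilon(t,s)f$ converge pointwise (by Gronwall-type estimates and dominated convergence), borrows from Fujiwara the fact that $E^{(0)}_\epsilon(t,s)f\to f$ in $L^2$ for each fixed $\epsilon$, and then passes to the limit via Fatou together with the uniform-in-$\epsilon$ $L^2$ bound. That argument never needs any stationary-phase or Schur estimate for the non-smooth phase. If you want to keep your decomposition, you should replace the Schur step by an application of Proposition \ref{pro3} to the near-diagonal piece and re-examine carefully how much integration by parts the regularity of $\omega$ actually permits in the far region.
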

\begin{proof}
First of all we observe that, by \eqref{eq20}, it is sufficient to consider $f\in \cS(\rd)$. \par The result is clearly related to the stationary phase principle, but it is not easy to justify its application at this Sobolev regularity. Instead we argue ``by density'', since we already know from
\cite[Proposition 4.3]{fujiwara1} that the result holds when $V$ satisfies
\begin{equation}\label{eq40}
\partial^\alpha_x V\in L^\infty(\R\times\rd)\quad\textit{for}\ |\alpha|\geq 2.
\end{equation} \par
 Now, let $V_\epsilon$ be smooth regularizations of $V$, as in the proof of Proposition \ref{pro4}. It is easy to see (using $\hul^{d+1}(\rd)\subset L^\infty(\rd)$ and $L^1(\rd)\ast L^\infty(\rd)\subset L^\infty(\rd)$) that, if $V$ satisfies Assumption (A) in introduction, the potentials $V_\epsilon$ enjoy the property in \eqref{eq40}.\par

 Let $S_\epsilon(t,s,x,y)$ be the corresponding generating functions and
\begin{equation}\label{ezero-20}
E^{(0)}_\epsilon(t,s)f(x):=\frac{1}{(2\pi i (t-s) \hbar)^{d/2}} \int_{\rd} e^{i\hbar^{-1}S_\epsilon(t,s,x,y)} f(y)\, dy.
\end{equation}
 As already observed in the proof of Proposition \ref{pro4}, $D^2_x V_\epsilon(t,\cdot)$ belongs to a bounded subset of $\hul^{d+1}(\rd)$, so that all the results in Section 3 hold (with $\kappa=d+1$) uniformly with respect to $\epsilon$. In particular, the functions $S_\epsilon(t,s,x,y)$ are defined for $0<|t-s|\leq \delta$ for some fixed $\delta>0$, independent of $\epsilon$. \par
 Let us now prove that $E^{(0)}_\epsilon(t,s)f(x)$ converges to $E^{(0)}(t,s)f(x)$ for every $x\in\rd$, if $0<|t-s|\leq\delta$.\par
We first claim that \[
S_\epsilon(t,s,x,y)\to S(t,s,x,y)\quad  \textrm{pointwise as $\epsilon\to 0$}
\]
if $0<|t-s|\leq\delta$. In fact we have, with obvious notation,
\[
S_\epsilon(t,s,x,y)=\int_s^t \frac{1}{2}|\xi_\epsilon(\tau,s,y,\eta_\epsilon(t,s,x,y))|^2-V_\epsilon(\tau,x_\epsilon(\tau,s,y,\eta_\epsilon(t,s,x,y)))\, d\tau.
\]
We then apply the dominated convergence theorem: to check the convergence of the integrand function for almost every $\tau\in\R$ one uses the point e) in the proof of Proposition \ref{pro4}, the fact that $\eta_\epsilon(t,s,x,y)\to \eta(t,s,x,y)$ pointwise (which follows easily by contradiction) and the fact that $V_\epsilon(\tau,\cdot)\to V(\tau,\cdot)$ uniformly on the compact subsets of $\rd$, for almost every $\tau$. To check that the integrand function is conveniently dominated for fixed $t,s,x,y$, it is useful to consider the Taylor expansion (cf.\ \eqref{taylor}) 
\[
V(t,x)=a_0(t)+\sum_{j=1}^d a_j(t)x_j+V^{(2)}(t,x),
\]
where $a_j\in L^1_{loc}(\rd)$, $j=0,\ldots,d$, and $V^{(2)}\in L^\infty_{loc}(\R\times\rd)$.
If the mollifier $\rho(x)$ is e.g.\ an even function when restricted to every coordinate axis, we have 
\[
V_\epsilon(\tau,x)=V(\tau,x)\ast \rho_\epsilon(x)=a_0(\tau)+\sum_{j=1}^d a_j(\tau)x_j+V^{(2)}(\tau,x)\ast\rho_\epsilon(x).
\]
 Hence the desired claim follows easily from the point b) in the proof of Proposition \ref{pro4}.\par
Now, by the claim just proved we have indeed
\[
E_\epsilon^{(0)}(t,s)f(x)\to E^{(0)}(t,s)f(x)
\]
for every $x\in\rd$, as $\epsilon\to 0$, by the dominated convergence theorem, because $f\in\cS(\rd)\subset L^1(\rd)$. \par
By the Fatou theorem and the definition of ``$\liminf$'', for every $\mu>0$ there exists $\epsilon_0>0$ such that
\begin{align*}
\|E^{(0)}(t,s)f-f\|_{L^2}&\leq \liminf_{\epsilon\to 0}\|E_\epsilon^{(0)}(t,s)f-f\|_{L^2}
\\ &\leq\|E_{\epsilon_0}^{(0)}(t,s)f-f\|_{L^2}+\mu.
\end{align*}
Now, we know from the analogous result for smooth potentials in \cite[Proposition 4.3]{fujiwara1} that 
\[
\lim_{t\to s}\|E_{\epsilon_0}^{(0)}(t,s)f-f\|_{L^2}=0
\]
and therefore we conclude that
\[
\limsup_{t\to s}\|E^{(0)}(t,s)f-f\|_{L^2}\leq\mu.
\]
Since $\mu$ is arbitrary, we obtain the desired conclusion. 
\end{proof}

We continue the study of $E^{(0)}(t,s)$ by observing that it is a parametrix in the sense that 
\begin{equation}\label{eq222}
\big(i\hbar\partial_t+\frac{1}{2}\hbar^2\Delta-V(t,x)\big) E^{(0)}(t,s)f=G^{(0)}(t,s)f
\end{equation}
with
\begin{equation}\label{defgn}
G^{(0)}(t,s)f =\frac{(1/2)i\hbar(t-s)}{(2\pi i (t-s) \hbar)^{d/2}} \int_{\rd} e^{i\hbar^{-1}S(t,s,x,y)} \Delta_x \omega (t,s,x,y) f(y)\, dy,
\end{equation}
cf.\ \cite[Formula (1.12)]{fujiwara2}, where $\omega(t,s,x,y)$ is defined in \eqref{eq12} (differentiation under the integral sign is justified e.g.\ for $f\in\cS(\rd)$ exactly as in \cite[Proposition 4.5]{fujiwara1}).\par
It follows again from Propositions \ref{pro7} (with $\kappa=d+1$) and Proposition \ref{pro3} (with $s=d+1$)  that $G^0(t,s)$ is bounded on $L^2(\rd)$ and satisfies the key estimate
\begin{equation}\label{eq13}
\| G^{(0)}(t,s)\|_{L^2\to L^2}\leq C\hbar |t-s|
\end{equation}
for $0<|t-s|\leq\delta$, possibly for a smaller value of $\delta$. \par
Concerning the actual propagator, we can easily prove its existence for more general potentials. 
\begin{proposition}\label{prop8}
Let $V(t,x)$ be a real-valued function in $L^1_{loc}(\R\times\rd)$ such that for almost every $t\in\R$ and $|\alpha|\leq 2$ the derivatives $\partial^\alpha_x V(t,x)$ exist and are continuous with respect to $x$, with $\partial^\alpha_x V\in L^\infty(\R\times\rd)$ for $|\alpha|=2$.\par Let $s\in\R$.
Then the Cauchy problem 
\[
\begin{cases}
i\hbar \partial_t u=-\frac{1}{2}\hbar^2\Delta u+V(t,x)u\\
u(s,x)=u_0(x)
\end{cases}
\]
is forward and backward globally wellposed in $L^2(\rd)$. \par
If we denote by $U(t,s)$ the corresponding propagator, for every $T>0$ there exists a constant $C=C(T)>0$ independent of $\hbar$ ($0<\hbar\leq 1$), such that
\begin{equation}\label{eq18}
\|U(t,s)\|_{L^2\to L^2}\leq C
\end{equation}
for $|t-s|\leq T$. 
\end{proposition}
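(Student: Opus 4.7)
The plan is to proceed by regularization in the space variable followed by a limiting argument. I would mollify $V$ via $V_\epsilon(t,\cdot):=V(t,\cdot)\ast\rho_\epsilon$, where $\rho_\epsilon(x)=\epsilon^{-d}\rho(\epsilon^{-1}x)$ and $\rho\in C^\infty_c(\rd)$ is a nonnegative mollifier that is \emph{symmetric} in each coordinate axis (so $\int y\,\rho(y)\,dy=0$). For each fixed $\epsilon>0$ the potential $V_\epsilon(t,x)$ is smooth in $x$ with all derivatives in $L^\infty(\R\times\rd)$ (those of order $\ge 2$ being bounded uniformly in $\epsilon$, the higher ones only for each fixed $\epsilon$). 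Classical theory---e.g.\ Faris--Lavine essential self-adjointness of $H_\epsilon(t):=-\tfrac{1}{2}\hbar^2\Delta+V_\epsilon(t,\cdot)$ on $C^\infty_c(\rd)$ together with the Kato theory of linear evolution equations, or directly Fujiwara's results---then furnishes a unitary propagator $U_\epsilon(t,s)$ on $L^2(\rd)$, with $\|U_\epsilon(t,s)\|_{L^2\to L^2}=1$. My goal is to show $\{U_\epsilon(t,s)\}$ is Cauchy in the operator norm as $\epsilon\to 0$ and to identify the limit with the sought propagator $U(t,s)$.

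The crux is the uniform bound $\|V_\epsilon-V\|_{L^\infty(\R\times\rd)}\le C\epsilon^2$. This would fail for a generic mollifier but holds for the symmetric one by exploiting the Taylor decomposition
\begin{equation*}
V(t,x)=a_0(t)+\sum_{j=1}^d a_j(t)x_j+V^{(2)}(t,x),\qquad V^{(2)}(t,0)=0=\nabla_xV^{(2)}(t,0),
\end{equation*}
already used in \eqref{taylor}, where $D^2_xV^{(2)}\in L^\infty(\R\times\rd)$. Convolution with $\rho_\epsilon$ leaves the affine part invariant, so $V_\epsilon-V=V^{(2)}_\epsilon-V^{(2)}$, and the symmetry of $\rho_\epsilon$ kills the first-order term in the Taylor expansion of $V^{(2)}(t,x-y)$ centred at $x$, leaving
\begin{equation*}
V^{(2)}_\epsilon(t,x)-V^{(2)}(t,x)=\int\Bigl(V^{(2)}(t,x-y)-V^{(2)}(t,x)+y\cdot\nabla_xV^{(2)}(t,x)\Bigr)\rho_\epsilon(y)\,dy,
\end{equation*}
bounded in absolute value by $C\,\|D^2_xV^{(2)}\|_{L^\infty}\int|y|^2\rho_\epsilon(y)\,dy\le C'\epsilon^2$, uniformly in $(t,x)$.

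With this estimate in hand, the standard Duhamel identity
\begin{equation*}
i\hbar\bigl(U_{\epsilon'}(t,s)-U_\epsilon(t,s)\bigr)=-\int_s^t U_\epsilon(t,\tau)\bigl(V_\epsilon-V_{\epsilon'}\bigr)(\tau,\cdot)U_{\epsilon'}(\tau,s)\,d\tau,
\end{equation*}
combined with unitarity of $U_\epsilon$ and $U_{\epsilon'}$, yields
\begin{equation*}
\|U_{\epsilon'}(t,s)-U_\epsilon(t,s)\|_{L^2\to L^2}\le\frac{C|t-s|}{\hbar}\bigl(\epsilon^2+(\epsilon')^2\bigr),
\end{equation*}
so $\{U_\epsilon(t,s)\}$ is Cauchy in $\mathcal{B}(L^2(\rd))$ and its limit $U(t,s)$ is an isometry (in fact giving \eqref{eq18} with $C=1$). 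The cocycle relation $U(t,\sigma)U(\sigma,s)=U(t,s)$, the normalisation $U(t,t)=I$, and the invertibility $U(t,s)^{-1}=U(s,t)$ all pass to the limit directly from their $\epsilon$-analogues, implying forward and backward global well-posedness. The only delicate point I foresee is verifying that $U(t,s)u_0$ actually solves the Schr\"odinger equation in a suitable mild/distributional sense: I would handle this by passing $\epsilon\to 0$ in the integral equation
\begin{equation*}
U_\epsilon(t,s)u_0=e^{-i\hbar^{-1}(t-s)H_0}u_0-\frac{i}{\hbar}\int_s^t e^{-i\hbar^{-1}(t-\tau)H_0}V_\epsilon(\tau,\cdot)U_\epsilon(\tau,s)u_0\,d\tau,
\end{equation*}
with $H_0:=-\tfrac{1}{2}\hbar^2\Delta$, where dominated convergence combined with a spatial cutoff accommodates the fact that $V$ itself is only locally bounded.
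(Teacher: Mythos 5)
Your proposal is correct and takes a genuinely different route from the paper.  The paper splits $V=V_0+V_1$ by a \emph{frequency} cutoff, $V_0=\chi(D_x)V$ and $V_1=(1-\chi(D_x))V$; it shows $\partial^\alpha_x V_0\in L^\infty$ for $|\alpha|\ge 2$ (elementary: the multiplier $\chi(\xi)\xi^\beta$ is in $\cS$) and $V_1\in L^\infty$ (less elementary: the symbol $-|\xi|^{-2}(1-\chi(\xi))$ has Fourier transform in $L^1$), invokes Fujiwara's propagator $U_0$ for $V_0$, and then solves a Volterra integral equation with the bounded perturbation $V_1$.  You instead regularize by \emph{spatial} mollification with a centred kernel, obtain a unitary $U_\epsilon$ from Fujiwara for each $\epsilon$, and prove the family is Cauchy in operator norm.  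The pivotal observation in your version is the uniform bound $\|V_\epsilon-V\|_{L^\infty(\R\times\rd)}\le C\epsilon^2$: this is exactly right, and it is essential that $\rho$ is even in each variable — otherwise the linear part $\sum_j a_j(t)x_j$ of the Taylor expansion \eqref{taylor} would be shifted by $\epsilon a_j(t)\int y_j\rho$, and since $a_j\in L^1_{\rm loc}(\R)$ only, the difference $V_\epsilon-V$ would not even be in $L^\infty_t$.  Your route has a concrete advantage regarding \eqref{eq18}: since each $U_\epsilon$ is unitary and the operator-norm limit of unitaries satisfying the cocycle relation is again unitary, you get $C=1$ directly, whereas the Volterra bound alone would produce a factor $\exp(C\hbar^{-1}T)$ and the paper must appeal implicitly to self-adjointness to remove it.  On the other hand the paper's Volterra fixed-point argument delivers existence and uniqueness in $C([s-T,s+T];L^2)$ in one stroke, while in your scheme the identification of $U(t,s)$ with the (unique) solution operator — the point you flag as delicate — still needs a short argument (e.g.\ passing to the limit in the weak space-time formulation, using $V_\epsilon\to V$ locally uniformly, and invoking uniqueness from the same mollified Duhamel identity).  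Both sets of gaps are of the same standard sort and the two proofs are of comparable length; yours is a valid, and in one respect tidier, alternative.
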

\begin{proof}
Let $\chi(\xi)$ be a smooth function in $\rd$ supported in $|\xi|\leq 2$, with $\chi(\xi)=1$ for $|\xi|\leq 1$. We split the potential as 
\begin{equation}\label{eq16}
V(t,x)=\underbrace{\chi(D_x)V(t,x)}_{V_0(t,x)}+\underbrace{(1-\chi(D_x))V(t,x)}_{V_1(t,x)}.
\end{equation}
We claim that 
\begin{equation}\label{eq14}
\partial^\alpha_x V_0\in L^\infty(\R\times\rd)\quad{\rm for}\ |\alpha|\geq 2
\end{equation}
and 
\begin{equation}\label{eq15}
V_1\in L^\infty(\R\times\rd).
\end{equation}
Let us prove \eqref{eq14}. For $|\alpha|\geq 2$, write $\alpha=\beta+\gamma$, with $|\gamma|=2$ and 
\[
\partial^\alpha_x V_0(t,x)=\chi(D_x)\partial^\beta_x(\partial^\gamma_x V(t,x)).
\] We have $\partial^\gamma_x V\in L^\infty(\R\times\rd)$ by assumption and the Fourier multiplier $\chi(D_x)\partial^\beta_x$ is bounded in $L^\infty(\rd)$, because its symbol is the smooth compactly supported function $i^{|\beta|}\chi(\xi)\xi^\beta$. \par
Concerning \eqref{eq15}, we write 
\[
V_1(t,x)=(1-\chi(D_x))\Delta^{-1}_x\Delta_x V(t,x).
\]
By assumption $\Delta_x V\in L^\infty(\R\times\rd)$ and the Fourier multiplier $(1-\chi(D_x))\Delta_x^{-1}$ is bounded on $L^\infty(\rd)$, because its symbol $\sigma(\xi)=-|\xi|^{-2}(1-\chi(\xi))$ is smooth in $\rd$ and has (inverse) Fourier transform in $L^1(\rd)$; indeed repeated integrations by parts give $|x|^{k}\widehat{\sigma}(x)\in L^\infty$ for every $k\geq d-1$.\par
We now observe that the potential $V_0(t,x)$ falls in the class considered in \cite{fujiwara1,fujiwara2}, and the corresponding propagator $U_0(t,x)$ was proved in \cite[Theorem 3.1]{fujiwara2} to satisfy the estimates
\begin{equation}\label{eq17}
\|U_0(t,s)\|_{L^2\to L^2}\leq C
\end{equation}
for $|t-s|\leq T$.  \par
The perturbed original Cauchy problem can be written in integral form as
\[
u(t)=U_0(t,s)u_0-i\hbar^{-1}\int_s^t U_0(t,\tau) V_1(\tau,\cdot)u(\tau)\, d\tau.
\]
This is a Volterra-type integral equation, which by \eqref{eq15} and \eqref{eq17} is easily seen to have a unique solution in $C([s-T,s+T];L^2(\rd))$, for every $T>0$. By \eqref{eq17} it is also clear that the propagator $U(t,s)$ satisfies \eqref{eq18}. 
\end{proof}

We are ready to prove our main result.
\begin{proof}[Proof of Theorem \ref{mainteo}]
Consider the operator  $R^{(0)}(t,s)$ defined by
\[
R^{(0)}(t,s)=E^{(0)}(t,s)-U(t,s).
\]
By Proposition \ref{pro10} and \eqref{eq222} we can write
\[
R^{(0)}(t,s)f=-i\hbar^{-1}\int_s^t U(t,\tau)G^{(0)}(\tau,s)f\,d\tau.
\]
 
Using \eqref{eq13} and \eqref{eq18} we deduce that  
\begin{equation}\label{chiave}
\|R^{(0)}(t,s)\|_{L^2\to L^2}\leq  C(t-s)^{2}
\end{equation}
for $0<|t-s|\leq\delta$.\par
 We then proceed as in \cite[Lemma 3.2]{fujiwara2}, where the analogous result for smooth potentials was proved.\par
 Namely, we have to estimate the $L^2\to L^2$ norm of the operator
\begin{align*}
E^{(0)}&(\Omega,t,s)-U(t,s)=E^{(0)}(t,t_{L-1}) E^{(0)}(t_{L-1},t_{L-2})\ldots E^{(0)}(t_1,s)\\
&=\big(U(t,t_{L-1})+R^{(0)}(t,t_{L-1})\big)\ldots \big(U(t_1,s)+R^{(0)}(t_1,s)\big)-U(t,s).
\end{align*}
We can expand the right-hand side, grouping together the consecutive factors ``of type $U$'' (using the evolution property of the propagator), and then estimate the $L^2\to L^2$ norm of each ordered product by \eqref{eq18} and \eqref{chiave}. After that it is an algebraic matter to handle the sum which arises and express the result in the desired form. For full details we refer to the proof of \cite[Lemma 3.2]{fujiwara2}, which can be repeated essentially verbatim in our framework (the seminorms $\|\cdot\|_m$ which appear there have to be replaced by the norm of bounded operators on $L^2(\rd)$). This gives the desired formula \eqref{tre}.\ \
\end{proof}

\section{The case of higher order parametrices}
In this section we assume higher regularity on the potential and we prove a stronger convergence theorem. Namely, let $N\in\bN$, $N\geq1$. Suppose that
 \par\medskip
{\it $V(t,x)$  satisfies Assumption (B) in Section 3 with $\kappa=d+1+N([d/2]+3)$.}
\par\medskip\noindent
Therefore we will apply the results of Section 3 for this value of $\kappa$.\par
Let $\delta$ be the constant appearing in Proposition \ref{pro5bis}, so that the function $S(t,s,x,y)$ is well defined for $0<|t-s|\leq \delta$.\par 
In order to construct higher order parametrices, consider the functions $a_j(t,s,x,y)$, $j=1,2,\ldots,N,$ defined by the formulas (cf.\ \cite[Formulas (3.4) and (3.5)]{fujiwara1})
\begin{equation}\label{eq21}
a_1(t,s,x,y)=\exp\Big(-\frac{1}{2}\int_s^t (\tau-s)\Delta_x \omega(\tau,s,x(\tau),y)\,d\tau  \Big)
\end{equation}
and 
\begin{equation}\label{eq22}
a_j(t,s,x,y)=-\frac{1}{2}a_1(t,s,x,y)\int_s^t\frac{\Delta_x a_{j-1}(\tau,s,x(\tau),y)}{ a_1(\tau,s,x(\tau),y)}\, d\tau,
\end{equation}
for $j=2,\ldots,N$, where $\omega(t,s,x,y)$ is the function defined in \eqref{eq12} and $x(\tau)=x(\tau,s,y,\eta(t,s,x,y))$. \par
We need the following result on the regularity of the functions $a_j(t,s,x,y)$.
\begin{proposition}\label{pro9}
For $0<|t-s|\leq \delta$ we have 
\begin{equation}\label{eq23}
a_1(t,s,x,y)=1-(t-s)^2r(t,s,x,y)
\end{equation}
and 
\begin{equation}\label{eq24}
1/a_1(t,s,x,y)=1+(t-s)^2r'(t,s,x,y),
\end{equation}
where the functions $r(t,s,x,y)$ and $r'(t,s,x,y)$ belong to a bounded subset of\\  $\hul^{d+3+(N-1)([d/2]+3)}(\rdd)$.\par
Moreover for $2\leq j\leq N$ we have 
\begin{equation}\label{eq25}
\|a_j(t,s,\cdot,\cdot)\|_{\hul^{d+3+(N-j)([d/2]+3)}(\rdd)}\leq C|t-s|^{j+1}
\end{equation}
for some constant $C>0.$
\end{proposition}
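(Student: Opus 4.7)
The plan is to first establish the Sobolev regularity of the composed function
\[
G(\tau,s,x,y):=\Delta_x\omega(\tau,s,x(\tau,s,y,\eta(t,s,x,y)),y)
\]
that appears in \eqref{eq21}, and then bootstrap from this to $a_1$ via the Taylor expansion of the exponential, and to the higher $a_j$ by induction on $j$. The key trick to handle the double time-dependence in $G$ is the scaling identity
\[
x(\tau,s,y,\eta(t,s,x,y))=\tilde{x}\!\left(\tau,s,y,\tfrac{\tau-s}{t-s}\zeta(t,s,x,y)\right),
\]
which follows from the definitions $\tilde{x}(\tau,s,y,\zeta)=x(\tau,s,y,\zeta/(\tau-s))$ and $\zeta(t,s,x,y)=(t-s)\eta(t,s,x,y)$.

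Using this identity, the map realizing $G$ decomposes as the composition of three pieces: (i) the map $(x,y)\mapsto(y,\zeta(t,s,x,y))$, bi-Lipschitz uniformly in $t,s$ with Jacobian in a bounded subset of $\hul^{\kappa}(\rdd)$ by Proposition \ref{pro5bis}; (ii) a partial scaling in $\zeta$ by $\rho=(\tau-s)/(t-s)\in[0,1]$; (iii) the map $(y,\zeta')\mapsto(\tilde{x}(\tau,s,y,\zeta'),y)$, bi-Lipschitz with Jacobian in a bounded subset of $\hul^{\kappa}(\rdd)$ by Propositions \ref{pro5}--\ref{pro6} (together with the invertibility already established in Proposition \ref{pro5bis}). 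Starting from $\Delta_x\omega\in\hul^{\kappa}(\rdd)$ (by Proposition \ref{pro7}) and applying, in order, Proposition \ref{pro1} (composition with (iii)), Proposition \ref{pro0} (the scaling (ii), costing exactly $[d/2]+1$ derivatives), and Proposition \ref{pro1} again (composition with (i)), I conclude that $G(\tau,s,\cdot,\cdot)$ belongs to a bounded subset of $\hul^{\kappa-[d/2]-1}(\rdd)=\hul^{d+3+(N-1)([d/2]+3)}(\rdd)$, uniformly for $0<|t-s|,|\tau-s|\leq\delta$.

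For $a_1$, set $\Phi_a(t,s,x,y):=-\tfrac{1}{2}\int_s^t(\tau-s)\,G(\tau,s,x,y)\,d\tau$; since $\int_s^t(\tau-s)\,d\tau=(t-s)^2/2$, $\Phi_a$ lies in a bounded subset of $\hul^{d+3+(N-1)([d/2]+3)}$ with norm of order $(t-s)^2$. Because $d+3+(N-1)([d/2]+3)>d/2$, this space is a Banach algebra, so expanding $a_1=e^{\Phi_a}$ and $1/a_1=e^{-\Phi_a}$ as convergent Taylor series immediately yields \eqref{eq23} and \eqref{eq24}. For the higher $a_j$, I would argue by induction on $j\geq 2$: assuming
\[
\|a_{j-1}(\tau,s,\cdot,\cdot)\|_{\hul^{d+3+(N-j+1)([d/2]+3)}}\leq C|\tau-s|^{j}
\]
(valid for $j\geq 3$; for $j=2$ one uses instead the sharper information from \eqref{eq23} that $\Delta_x a_1=-(\tau-s)^2\Delta_x r$ is already $O((\tau-s)^2)$), each recursive step in \eqref{eq22} costs exactly $[d/2]+3$ derivatives: two from $\Delta_x$ and $[d/2]+1$ from composition with $x(\tau,s,y,\eta(t,s,x,y))$ via the scheme of the previous paragraph. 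Division by $a_1(\tau,s,x(\tau),y)$ and multiplication by $a_1(t,s,x,y)$ are then harmless thanks to the Banach algebra property and $a_1\approx 1$, while integrating $|\tau-s|^j$ over $[s,t]$ produces the required $|t-s|^{j+1}$ smallness.

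The main obstacle is the first step, namely the precise accounting of Sobolev regularity in the composition with the flow $x(\tau,s,y,\eta(t,s,x,y))$. The crucial observation is that the $t$-dependence enters only through the scaling factor $\rho=(\tau-s)/(t-s)$, so that Proposition \ref{pro0} can be applied uniformly with the optimal loss of $[d/2]+1$ derivatives. The value $\kappa=d+1+N([d/2]+3)$ in Assumption (B) is precisely calibrated so that after $N$ recursive applications in the construction of $a_1,\ldots,a_N$ one retains more than $d/2$ derivatives, which is needed both for the Banach algebra manipulations here and, eventually, in order to apply the $L^2$-continuity result of Proposition \ref{pro3} to the parametrices with amplitudes $a_j$.
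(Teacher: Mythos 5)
Your proposal is correct and follows essentially the same route as the paper's own proof: the identity $x(\tau,s,y,\eta(t,s,x,y))=\tilde{x}(\tau,s,y,\tfrac{\tau-s}{t-s}\zeta(t,s,x,y))$, the same three-way factorization (composition with $(y,\zeta)\mapsto(\tilde{x}(\tau,s,y,\zeta),y)$ via Proposition~\ref{pro1}, a partial $\zeta$-scaling via Proposition~\ref{pro0} with loss $[d/2]+1$, composition with $(x,y)\mapsto(y,\zeta(t,s,x,y))$ via Proposition~\ref{pro1}), exponentiation in the Banach algebra $\hul^{s}$, and induction on $j$ with a loss of $[d/2]+3$ derivatives per step. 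The arithmetic matches, and your handling of the base case $j=2$ via $\Delta_x a_1=-(\tau-s)^2\Delta_x r$ is exactly the paper's.
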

\begin{proof}
Let us now prove \eqref{eq23}. It follows from Proposition \ref{pro7} (with $\kappa=d+1+N([d/2]+3)$) that $\Delta_x\omega(\tau,s,x,y)$ belongs to a bounded subset of $\hul^{d+1+N([d/2]+3)}(\rdd)$ for $0<|\tau-s|\leq \delta$, so that by Proposition \ref{pro1} (with the map  $(y,\zeta)\mapsto (\tilde{x}(\tau,s,y,\zeta),y)$ in place of $g$) we deduce that the same is true for 
\[
\Delta_x\omega(\tau,s,\tilde{x}(\tau,s,y,\zeta),y)
\]
(the function $\tilde{x}(\tau,s,y,\zeta)$ is defined in Proposition \ref{pro5bis}). \par
By Proposition \ref{pro0} we see that the functions  
\[
\Delta_x\omega(\tau,s,\tilde{x}(\tau,s,y,[(\tau-s)/(t-s)]\zeta),y)
\]
for $0<|\tau-s|\leq |t-s|\leq \delta$ belong to a bounded subset of 
\[
\hul^{d+1+N([d/2]+3)-([d/2]+1)}(\rdd)=\hul^{d+3+(N-1)([d/2]+3)}(\rdd),
\]
 and again by Proposition \ref{pro1} (with the map $(x,y)\mapsto (y,\zeta(t,s,x,y))$ in place of $g$) the same holds for 
\[
\Delta_x\omega(\tau,s,\underbrace{\tilde{x}(\tau,s,y,[(\tau-s)/(t-s)]\zeta(t,s,x,y)}_{=x(\tau)},y)=\Delta_x\omega(\tau,s,x(\tau),y).
\]
 Hence we obtain the estimate
\[
\Big\|\int_s^t(\tau-s) \Delta_x\omega(\tau,s,x(\tau),y)\Big\|_{\hul^{d+3+(N-1)([d/2]+3)}}\leq C|t-s|^2.
\]
By expanding the exponential in \eqref{eq21} as a power series in the Banach algebra $\hul^{d+3+(N-1)([d/2]+3)}(\rdd)$ we deduce \eqref{eq23}. The same arguments apply to $1/a_1(t,s,x,y)$, which gives \eqref{eq24}.\par
Finally, by the same arguments as above and induction on $j$ one sees easily that the functions $a_j(t,s,x,y)$ for $2\leq j\leq N$ verify the estimates in \eqref{eq25} (as a basic step for $j=2$ one uses that $\Delta_x a_1(\tau,s,x,y)$ belongs to a bounded subset of $\hul^{d+1+(N-1)([d/2]+3)}(\rdd)$, therefore $\Delta_x a_1(\tau,s,x(\tau),y)$ belongs to a bounded subset of $\hul^{d+1+(N-1)([d/2]+3)-([d/2]+1)}(\rdd)=\hul^{d+3+(N-2)([d/2]+3)}(\rdd)$, etc.).
\end{proof}

We now define the amplitude
\[
a^{(N)}(t,s,x,y)=\sum_{j=1}^N (i\hbar^{-1})^{1-j} a_j(t,s,x,y),\quad N=1,2,\ldots
\]
and the corresponding oscillatory integral operator
\[
E^{(N)}(t,s)f(x)=\frac{1}{(2\pi i (t-s) \hbar)^{d/2}} \int_{\rd} e^{i\hbar^{-1}S(t,s,x,y)}a^{(N)}(t,s,x,y) f(y)\, dy.
\]

It follows from Propositions \ref{pro7}, \ref{pro9} and \ref{pro3} that $E^{(N)}(t,s)$ is bounded in $L^2(\rd)$ and satisfies the estimate
\begin{equation}\label{eq26}
\|E^{(N)}(t,s)\|_{L^2\to L^2}\leq C
\end{equation}
for some constant $C>0$, for $0<|t-s|\leq \delta$, possibly for a smaller value of $\delta$. Moreover, by Propositions \ref{pro9} and \ref{pro3} we have 
\[
\|E^{(N)}(t,s)-E^{(0)}(t,s)\|_{L^2\to L^2}\leq C(t-s)^2
\]
so that, by Proposition \ref{pro10}, for every $f\in L^2(\rd)$ we have 
\[
\lim_{t\to s} E^{(N)}(t,s)f=f
\]
in $L^2(\rd)$. 
\par
On the other hand we also have (cf.\ \cite[Formulas (3.9) and (3.11)]{fujiwara1})
\[
\big(i\hbar\partial_t+\frac{1}{2}\hbar^2\Delta-V(t,x)\big) E^{(N)}(t,s)f=G^{(N)}(t,s)f
\]
with
\begin{equation*}\label{defgn-bis}
G^{(N)}(t,s)f =-\frac{(-i\hbar)^{N+1}/2}{(2\pi i (t-s) \hbar)^{d/2}} \int_{\rd} e^{i\hbar^{-1}S(t,s,x,y)} \Delta_x a_N(t,s,x,y) (t,s,x,y) f(y)\, dy.
\end{equation*}
By \eqref{eq23} (if $N=1$) or \eqref{eq25} (if $N\geq2$) we see that the amplitude $\Delta_x a_N$ satisfies the estimate
\[
\|\Delta_x a_N(t,s,\cdot,\cdot)\|_{\hul^{d+1}(\rdd)}\leq C|t-s|^{N+1}.
\]
Hence Proposition \ref{pro3} gives
\begin{equation}\label{eq28}
\|G^{(N)}(t,s)\|_{L^2\to L^2}\leq C\hbar^{N+1}|t-s|^{N+1}.
\end{equation} 
We then consider the remainder operator
\[
R^{(N)}(t,s)f:=E^{(N)}(t,s)f-U(t,s)f=-i\hbar^{-1}\int_s^t U(t,\tau)G^{(N)}(\tau,s)f\,d\tau.
\]
Using \eqref{eq28} and \eqref{eq18} we deduce that  
\begin{equation}\label{chiave2}
\|R^{(N)}(t,s)\|_{L^2\to L^2}\leq  C\hbar^{N}|t-s|^{N+2}
\end{equation}
for $0<|t-s|\leq\delta$.\par
Consider now the composition
\[
E^{(N)}(\Omega,t,s)=E^{(N)}(t,t_{L-1}) E^{(N)}(t_{L-1},t_{L-2})\ldots E^{(N)}(t_1,s),
\]
for any subdivision $\Omega:s=t_0<t_1<\ldots<t_L=t$ of the interval $[s,t]$ such that $\omega(\Omega)\leq\delta$.\par
We have the following result.
\begin{theorem}\label{mainteo2}
Assume that $V(t,x)$  satisfies Assumption (B) in Section 3 with $\kappa=d+1+N([d/2]+3)$, for some $N\geq 1$.\par For every $T>0$ there exists $C=C(T)>0$ such that for $0<t-s\leq T$ and any subdivision $\Omega$ of the interval $[s,t]$ with $\omega(\Omega)\leq \delta$, and $0<\hbar\leq 1$, we have
\begin{equation}\label{tre-bis}
\|E^{(N)}(\Omega,t,s)-U(t,s)\|_{L^2\to L^2}
\leq C \hbar^{N}\omega(\Omega)^{N+1}(t-s).
\end{equation}
\end{theorem}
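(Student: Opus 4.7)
The proof would follow the same scheme as that of Theorem \ref{mainteo}, with the key replacement of the remainder estimate \eqref{chiave} by its higher-order analogue \eqref{chiave2}. All the necessary analytic ingredients are already in place: the operator $E^{(N)}(t,s)$ is $L^2$-bounded uniformly for $0<|t-s|\leq\delta$ by \eqref{eq26}, the propagator $U(t,s)$ is $L^2$-bounded uniformly for $|t-s|\leq T$ by \eqref{eq18}, and, most importantly, the sharp remainder estimate \eqref{chiave2}
\[
\|R^{(N)}(t,s)\|_{L^2\to L^2}\leq C\hbar^{N}|t-s|^{N+2}, \qquad 0<|t-s|\leq\delta,
\]
is available with the $\hbar^{N}|t-s|^{N+2}$ scaling that drives the final rate.

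First I would write, for each $j=1,\ldots,L$,
\[
E^{(N)}(t_j,t_{j-1})=U(t_j,t_{j-1})+R^{(N)}(t_j,t_{j-1}),
\]
and expand the product $E^{(N)}(\Omega,t,s)=\prod_{j=L}^{1}\bigl(U(t_j,t_{j-1})+R^{(N)}(t_j,t_{j-1})\bigr)$ by multilinearity. The pure product of $U$'s equals $U(t,s)$ by the evolution property, so that $E^{(N)}(\Omega,t,s)-U(t,s)$ decomposes as a sum, indexed by the nonempty subsets $J=\{j_1<\ldots<j_k\}\subset\{1,\ldots,L\}$, of ordered products in which the factors at positions $j_i$ are $R^{(N)}(t_{j_i},t_{j_i-1})$ and the remaining consecutive $U$-factors are collapsed (using $U(t_a,t_b)U(t_b,t_c)=U(t_a,t_c)$) into a bounded number of $U$-operators whose $L^2\to L^2$ norm is controlled by \eqref{eq18}.

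Next I would estimate each term. For a subset $J$ of size $k$, the $L^2\to L^2$ norm of the corresponding product is bounded, using \eqref{eq18} and \eqref{chiave2}, by
\[
C^{k+1}\prod_{i=1}^{k}\hbar^{N}(t_{j_i}-t_{j_i-1})^{N+2}
\leq C^{k+1}\hbar^{kN}\omega(\Omega)^{k(N+1)}\prod_{i=1}^{k}(t_{j_i}-t_{j_i-1}),
\]
where the inequality exploits $(t_{j_i}-t_{j_i-1})^{N+1}\leq\omega(\Omega)^{N+1}$. Summing over all subsets $J$ of fixed cardinality $k$ and using the elementary bound
\[
\sum_{j_1<\ldots<j_k}\prod_{i=1}^{k}(t_{j_i}-t_{j_i-1})\leq\frac{(t-s)^{k}}{k!}
\]
gives a contribution $C^{k+1}\hbar^{kN}\omega(\Omega)^{k(N+1)}(t-s)^{k}/k!$. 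Summing then over $k\geq1$ yields
\[
\|E^{(N)}(\Omega,t,s)-U(t,s)\|_{L^2\to L^2}
\leq C\bigl(e^{C\hbar^{N}\omega(\Omega)^{N+1}(t-s)}-1\bigr),
\]
and since $\hbar\leq1$, $\omega(\Omega)\leq T$, $t-s\leq T$ keep the exponent bounded, one factors out the leading term and obtains the desired estimate \eqref{tre-bis}.

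There is no real obstacle here: the argument is purely algebraic/combinatorial once \eqref{chiave2} and \eqref{eq18} are at hand, and it reproduces \emph{verbatim} the scheme of \cite[Lemma 3.2]{fujiwara2}, as already exploited in the proof of Theorem \ref{mainteo}. The only point that requires care is to ensure that the collapsed $U$-blocks are uniformly bounded (guaranteed by \eqref{eq18} on the whole interval $[s,t]$, not just on the small subintervals), and that the combinatorial summation is carried out correctly so that one ends up with the single factor $(t-s)$ on the right-hand side rather than a higher power. The substantive gain over Theorem \ref{mainteo} is entirely encoded in the improved remainder bound \eqref{chiave2}, which in turn rests on the refined Sobolev regularity of $\Delta_x a_N$ proved in Proposition \ref{pro9} combined with the $L^2$ continuity statement of Proposition \ref{pro3}.
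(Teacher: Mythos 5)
Your proposal is correct and follows precisely the same scheme as the paper, namely the telescoping expansion of $\prod_j\bigl(U(t_j,t_{j-1})+R^{(N)}(t_j,t_{j-1})\bigr)-U(t,s)$ with \eqref{eq18} controlling collapsed $U$-blocks and \eqref{chiave2} controlling the $R$-factors, exactly as in \cite[Lemma 3.2]{fujiwara2}. The only difference is that you spell out the combinatorial bookkeeping (the $1/k!$ bound and the exponential summation) that the paper delegates to the cited reference.
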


\begin{proof}
As in the case considered in Section 4 (which essentially corresponds to $N=0$), the desired result follows exactly as in \cite[Lemma 3.2]{fujiwara2}; namely one expands the right-hand side of
\[
E^{(N)}(\Omega,t,s)-U(t,s)=\big(U(t,t_{L-1})+R^{(N)}(t,t_{L-1})\big)\ldots \big(U(t_1,s)+R^{(N)}(t_1,s)\big)-U(t,s),
\]
and estimates the $L^2\to L^2$ norm of each factor. To this end one uses \eqref{eq18} for the factors ``of type $U$'' and \eqref{chiave2} for the factors ``of type $R$''. We refer to \cite[Lemma 3.2]{fujiwara2} for full details.
\end{proof}


\section*{Acknowledgment}
We would like to thank Luigi Ambrosio, Ubertino Battisti, Elena Cordero and Paolo Tilli for useful discussions related to the subject of this paper.

\end{document}